\documentclass{icm2010}

\usepackage{hyperref}


\def\eq#1\en{\begin{equation}#1\end{equation}}  
\def\eqalign#1\enalign{
	\begin{align}#1\end{align}
	}
\newcommand{\nnb}	{\nonumber \\} 

\newcommand{\lbeq}[1]  {\label{e:#1}}
\newcommand{\refeq}[1] {\eqref{e:#1}}    
\makeatletter
\newcommand{\labelcounter}[2]{{%
	\stepcounter{#1}
	\protected@write\@auxout{}%
	{\string\newlabel{#2}{{\csname the#1\endcsname}{\thepage}}}%
	{\ref{#2}}
	}}
\makeatother

\newcommand{\Cbold} {{\mathbb C}}  
  
\newcommand{\Ebold} {{\mathbb E}}

\newcommand{\Nbold} {{\mathbb N}}

\newcommand{\Rbold} {{\mathbb R}}

\newcommand{\Zbold} {{\mathbb Z}}


 
\newcommand{\Bcal}   {\mathcal{B}} 
\newcommand{\Ccal}   {\mathcal{C}}

\newcommand{\Ncal}   {\mathcal{N}} 
 
\newcommand{\Pcal}   {\mathcal{P}}

\newcommand{\Scal}   {\mathcal{S}}

\newcommand{\Vcal}   {\mathcal{V}}


\newcommand{\Vhat} {{\hat{V} }}






\newcommand{\Zd}    {{ {\Zbold}^d }}

\newcommand{\nin}  {{ \not\in }}



\usepackage[dvips]{graphicx}
\usepackage{bbm} 

\usepackage[usenames]{color}

\newcommand{\LT}{{\rm Loc}  }
\newcommand{\LTbar}{\overline{\LT}}


\newcommand{\R}{\Rbold}
\newcommand{\Z}{\Zbold}

\newcommand{\N}{\Nbold}
\newcommand{\C}{\mathbb{C}}
\newcommand{\1}{\mathbbm{1}}

\newcommand{\varphib}{\bar\varphi}
\newcommand{\phib}{\bar\phi}
\newcommand{\psib}{\bar\psi}

\newcommand{\pair}[1]{\langle #1 \rangle}

\newcommand{\Phipol}{\Pi}

\newcommand{\h}{\mathfrak{h}}
\newcommand{\Vpt}{V_{\rm pt}}
\newcommand{\gpt}{g_{\mathrm{pt}}}
\newcommand{\nupt}{\nu_{\mathrm{pt}}}

\newcommand{\zpt}{z_{\mathrm{pt}}}
\newcommand{\lambdapt}{\lambda_{\mathrm{pt}}}
\newcommand{\qpt}{q_{\mathrm{pt}}}

\newcommand{\gbar}{\bar{g}}
\newcommand{\pp}{a}
\newcommand{\qq}{b}
\newcommand{\sigmaa}{\sigma}
\newcommand{\sigmab}{\bar{\sigma}}
\newcommand{\s}{\mathfrak s}

\newcommand{\half}{\textstyle{\frac 12}}

\newcommand{\rpt}{r^{\mathrm{pt}}}

\newtheorem{theorem}{Theorem}[section]
\newtheorem{prop}[theorem]{Proposition}

\theoremstyle{definition}
\newtheorem{definition}{Definition}
\newtheorem{example}{Example}

\title[Self-avoiding walk in four dimensions]
        {Renormalisation group analysis of
        weakly self-avoiding walk in dimensions four and higher}
\author[D. Brydges and G. Slade]{David Brydges and Gordon Slade
}
\contact[db5d@math.ubc.ca]{
Department of Mathematics, \\
University of British Columbia,\\
Vancouver, BC, Canada V6T 1Z2
}
\contact[slade@math.ubc.ca]{
Department of Mathematics, \\
University of British Columbia,\\
Vancouver, BC, Canada V6T 1Z2
}

\begin{document}

\noindent \date{March 15, 2010}

\begin{abstract}
We outline a proof, by a rigorous renormalisation group method, that
the critical two-point function for continuous-time weakly
self-avoiding walk on $\Zd$ decays as $|x|^{-(d-2)}$ in the critical
dimension $d=4$, and also for all $d>4$.
\end{abstract}

\begin{classification}
Primary 82B41; Secondary 60K35.
\end{classification}

\begin{keywords}
self-avoiding walk, Edwards model, renormalization group,
supersymmetry, quantum field theory
\end{keywords}

\maketitle




\section{Introduction}\label{sec:introduction}

We prove $|x|^{-(d-2)}$ decay for the critical two-point function of
the continuous-time weakly self-avoiding walk in dimensions $d \ge 4$.
This is a summary of the ideas and the steps in the proof.  The
details are provided in \cite{BS11}.  The proof is based on a rigorous
renormalisation group argument.  For the case $d>4$, this provides an
approach completely different from the lace expansion methods of
\cite{Hara08,HHS03}.  But our main contribution is that our method
applies also in the case of the \emph{critical} dimension $d=4$, where
lace expansion methods do not apply.

Renormalisation group methods have been applied previously to study
weakly self-avoiding walk on a 4-dimensional \emph{hierarchical}
lattice.  The continuous-time version of the model has been studied in
the series of papers \cite{BEI92,GI95,BI03c,BI03d}; see \cite{BJS03}
for a review.  More recently, a completely different renormalisation
group approach to the discrete-time weakly self-avoiding walk on a
4-dimensional hierarchical lattice has been developed in \cite{HO10}.

The $|x|^{-(d-2)}$ decay for the two-point function for a continuum
4-dimensional Edwards model, with a smoothed delta function, has been
proved in \cite{IM94}; unlike our model, this is not a model of walks
taking nearest neighbour steps in the lattice, but it is expected to
be in the same universality class as our model. The relation between
our model and the Edwards model is discussed in \cite{MS93}.  A big
step towards an understanding of the behaviour in dimension
$d=4-\epsilon$ is taken in \cite{MS08} (their model is formulated on a
lattice in dimension $3$ but it mimics the behaviour of the
nearest-neighbour model in dimension $4-\epsilon$).

Our renormalisation group method is a greatly extended and
generalised form of work in \cite{BEI92,BI03c,BI03d} for the
hierarchical lattice and \cite{BY90,DH92,BDH98,BMS03} for continuum
quantum field theory.  Details will appear in
\cite{BS11}.  Our method is
based on an exact functional integral representation of the two-point
function of the continuous-time self-avoiding walk as the two-point
function of a quantum field theory containing both bosonic and
fermionic fields.  Such representations have been recently summarised
in \cite{BIS09}.

\subsection{Background}

A self-avoiding walk on the simple cubic lattice $\Zd$ is an
\emph{injective} map
\begin{equation}
    \omega:\{0,1,\dots ,n \} \rightarrow \Zd
\end{equation}
such that for all $i$, $\omega (i)$ and $\omega (i+1)$ are nearest
neighbours in $\Zd$. We call $n$ the number of steps. The main result
of this article will actually be a statement about about random maps
$X:[0,T] \rightarrow \Zd$, but to explain the background we start with
self-avoiding walk.

\begin{figure}[h]
\begin{center}
\includegraphics[scale=.33]{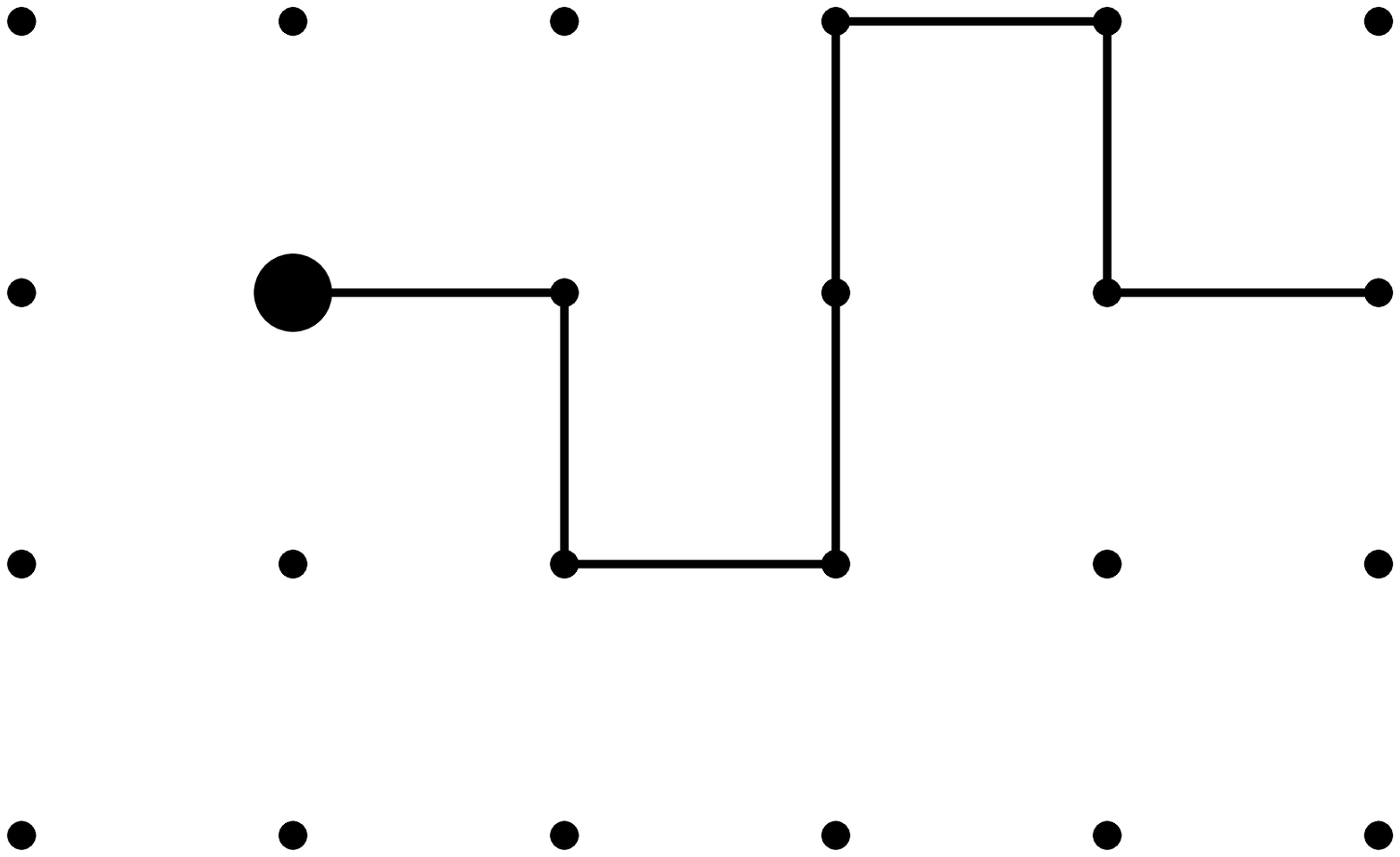}
\end{center}
\caption{\label{fig:8 steps}
An 8 step self-avoiding walk on $\Zd,\,  d=2$.}
\end{figure}

Let ${\cal S}_n$ be the set of all self-avoiding walks with $n$ steps
and with $\omega (0) =0$.  Let $c_{n}$ be the number of elements in
${\cal S}_n$.  By declaring that all $\omega$ in ${\cal S}_n$ have
equal probability $1/c_{n}$ we make ${\cal S}_n$ into a probability
space with expectation $\Ebold_{n}$. The subscript $n$ reminds us that
the probability space depends on $n$. In the sequel ``model'' means a
choice of probability space and law.

This model arose in statistical mechanics.  It is, for example, a
natural model when one is interested in the conformation of linear
polymer molecules.  There is another natural model called the
\emph{true} or \emph{myopic} self-avoiding walk.  Unlike our model,
true self-avoiding walk is a stochastic process which at each step
looks at its neighbours and chooses uniformly from those visited least
often in the past. Recent progress on this model is reported in
\cite{HTV09}.

The key problem is to determine the growth in $n$ of the mean-square
displacement,
\begin{equation}
    \Ebold_{n} |\omega (n)|^{2}
    =
    c_{n}^{-1}\sum_{\omega \in \Scal_{n}} |\omega (n)|^{2}
    ,
\end{equation}
where $|\omega (n)|$ is the Euclidean norm of $\omega (n)$ as an
element of $\Zd$. More precisely, we want to prove the existence of
$\nu$ such that
\begin{equation}
    \lim_{n\rightarrow \infty}n^{-2\nu}\Ebold_{n} |\omega (n)|^{2}
    \in
    (0,\infty)
    ,
\end{equation}
and we want to calculate $\nu$. We will call this the $\nu$ problem.

As explained in \cite[page 16]{MS93}, there is an easier version of
this problem that we will call the \emph{Abelian} $\nu$ problem,
because proving the existence of $\nu $ after solving the Abelian
problem is a Tauberian problem.  Let $\Scal = \bigcup_{n} \Scal_{n}$
and let $n (\omega) =n$ for $\omega \in \Scal_{n}$. For $z >0$ we
define the \emph{two-point function}
\begin{equation}
    G_{z} (x)
    =
    \sum_{\omega \in \Scal} z^{n (\omega)} \1_{\omega (n (\omega))=x}
    .
\end{equation}
Let
\begin{equation}
    \chi^{(p)}
    =
    \sum_{\omega \in \Scal} z^{n (\omega)} |\omega (n (\omega))|^{p}
    =
    \sum_{x \in \Zd} G_{z} (x) |x|^{p}
    .
\end{equation}
The Abelian version of the $\nu$ problem is to determine the growth of
$\sqrt{\chi^{(2)}}/\chi^{(0)}$ as $z\uparrow z_{c}$, where $z_{c}$ is
the common radius of convergence of the power series in this ratio. If
$\nu$ exists then it equals the Abelian $\nu$. In dimensions $d\ge 5$,
according to the following theorem, $\nu =1/2$.

\begin{theorem}
\label{thm:d5}
\cite{HS92b,HS92a}
For $d \geq 5$, there are positive constants $A,D,c,\epsilon$ such that
\begin{align}
    c_n & =   A\mu^n [ 1+O(n^{-\epsilon})], \\
    \Ebold_{n} |\omega(n)|^2
    & =  Dn[ 1+O(n^{-\epsilon})],
\end{align}
and the rescaled self-avoiding walk converges weakly to Brownian motion:
\begin{equation}
    \label{e:scaling-limit}
    \frac{\omega (\lfloor nt \rfloor)}{\sqrt{Dn}}
    \Rightarrow  B_t.
\end{equation}
Also \cite{Hara08},
as $|x|\to\infty$,
\begin{equation}
    \label{e:critical-G}
    G_{z_c}(x) = c |x|^{-(d-2)}[1+O(|x|^{-\epsilon})].
\end{equation}
\end{theorem}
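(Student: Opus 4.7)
My approach is the lace expansion, originated by Brydges--Spencer for weakly self-avoiding walk and developed by Hara--Slade for the strictly self-avoiding model. The first step is to derive an exact Ornstein--Zernike-type convolution identity
\[
G_z(x) = \delta_{0,x} + \sum_{y \in \Zd} \bigl(z J(y) + \Pi_z(y)\bigr)\, G_z(x-y),
\]
where $J$ is the nearest-neighbour step kernel and $\Pi_z(y) = \sum_{n \geq 2} z^n \pi_n(y)$ is obtained by expanding the self-avoidance constraint via laces: minimal connected sub-graphs of the intersection graph on $\{0,1,\ldots,n\}$ determined by the self-intersections of $\omega$. Combinatorial inclusion--exclusion isolates laces and produces an alternating sum whose terms $\pi_n(y)$ admit diagrammatic bounds by convolutions of simple random walk two-point functions — the classical bubble and triangle diagrams.

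The second, and most delicate, step is to control this expansion uniformly in $z$ up to $z_c$. I would run a bootstrap: introduce three monotone functionals $f_i(z)$ (encoding, respectively, a bound on the susceptibility $\chi(z) = \hat G_z(0)$, a uniform bound on $\hat G_z(k)$, and a H\"older-type control on the discrete Laplacian of $\hat G_z$), and show that if $f_i(z) \leq K$ for all $i$ on some interval, then in fact $f_i(z) \leq K/2$ there. Combined with continuity in $z$ and the trivial bounds at $z=0$, this forces the estimates to persist for all $z < z_c$. This is where the main obstacle lies: the diagrammatic bounds on $\pi_n$ require that the random walk bubble $\sum_x G_{z_c}^{\rm SRW}(x)^2$ be finite, which is precisely the quantitative expression of $d \geq 5$. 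Once the bootstrap closes, one obtains sharp decay bounds of the form $|\pi_n(y)| \leq C \mu^{-n} (1+|y|)^{-(d-2)-\eta}$ uniformly in $z \leq z_c$.

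With the lace expansion in hand, the remaining conclusions follow by comparatively standard generating-function arguments. Fourier-transforming the recursion gives
\[
\hat G_z(k) = \frac{1}{1 - z \hat J(k) - \hat \Pi_z(k)},
\]
and critical analysis at $z = z_c$ shows the denominator vanishes quadratically in $k$. An Abelian--Tauberian argument converts this into $c_n = A\mu^n[1+O(n^{-\epsilon})]$ with $\mu = z_c^{-1}$; a second-derivative-at-zero computation on $\hat G_z$, combined with the same Tauberian step, yields $\Ebold_n|\omega(n)|^2 = Dn[1+O(n^{-\epsilon})]$. Weak convergence to Brownian motion reduces to convergence of finite-dimensional distributions, which via the lace expansion and a similar diagrammatic analysis reduces back to the two-point function case. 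Finally, the pointwise asymptotic \refeq{critical-G} is obtained by Fourier inversion from $\hat G_{z_c}(k) \sim c|k|^{-2}$, using smoothness estimates on $\hat \Pi_{z_c}(k)$ away from $k=0$ to extract the leading $|x|^{-(d-2)}$ decay with a polynomial error.
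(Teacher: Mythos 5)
The paper itself does not prove this theorem: it is imported verbatim from Hara--Slade \cite{HS92b,HS92a} and Hara \cite{Hara08}, and the surrounding text emphasises that the authors' renormalisation group method is an \emph{alternative} to the lace expansion (one that, unlike it, reaches $d=4$ for the weakly self-avoiding model). So your proposal cannot be compared with an argument in this paper; it is, rather, an outline of the proof in the cited references --- the Brydges--Spencer convolution identity with the lace-generated $\Pi_z$, diagrammatic (bubble) bounds, a three-function bootstrap closed by a continuity argument, and Fourier/Tauberian extraction of the asymptotics of $c_n$ and of the mean-square displacement. As an outline of that route it is essentially right.

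Two steps are, however, substantially harder than your sketch suggests. First, closing the bootstrap for the \emph{nearest-neighbour} model at $d=5$ is the notoriously delicate part of \cite{HS92b}: the required smallness of the relevant diagrams at $z_c$ is not supplied by ``$d\ge 5$'' alone but by painstaking, computer-assisted numerical bounds on self-avoiding-walk (not simple random walk) quantities; also, the improvement ``$f_i\le K$ implies $f_i\le K/2$'' holds only under the simultaneous a priori assumption on all the $f_i$ and for carefully tuned constants. Second, and more seriously, the pointwise asymptotics \refeq{critical-G} do \emph{not} follow by Fourier inversion from $\hat G_{z_c}(k)\sim c|k|^{-2}$ together with smoothness of $\hat\Pi_{z_c}$ away from $k=0$: the $k$-space asymptotics control only weighted averages of $G_{z_c}$, and upgrading them to pointwise $|x|^{-(d-2)}$ decay remained open from 1992 until \cite{Hara08}, whose proof works directly in $x$-space with a separate bootstrap on quantities of the type $\sup_x |x|^{d-2}G_z(x)$ rather than by inverting the transform. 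Minor further points: your bound on $\pi_n(y)$ should carry $\mu^{n}$ rather than $\mu^{-n}$ (otherwise $z^n\pi_n(y)$ would be far too small), and weak convergence to Brownian motion requires tightness in addition to convergence of finite-dimensional distributions.
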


The limit in \eqref{e:scaling-limit} is called a scaling limit.  The
identification of scaling limits for dimensions $d=2,3,4$ is the grand
goal, but the $\nu$ problem is a key intermediate objective because
$n^{-\nu}\omega (\lfloor nt \rfloor)$ is the candidate sequence for
the scaling limit.

If we set up the probability space without imposing the injective
condition in the definition of $\omega $, then the mean-square
displacement is exactly $n$, because then the law for $\omega$ is that
of simple random walk. According to Donsker's Theorem, the scaling
limit of simple random walk, with $D=1$, is also Brownian motion.
Thus, in dimensions $d \ge 5$ self-avoiding walk and simple random
walk have the same scaling limit. When different models have the same
scaling limit, we say the models are in the same \emph{universality
class}. One of the goals of mathematical statistical mechanics is to
classify universality classes. 

Theorem~\ref{thm:d5} will not hold with $\nu =1/2$ for dimensions four
and less. There is a conjecture going back to \cite{BGZ73} that, for
$d=4$,
\begin{equation}
\label{e:d4logs}
    c_n  \sim A \mu^n (\log n)^{1/4}, \quad
    \Ebold_{n} |\omega (n)|^2  \sim D n (\log n)^{1/4}
    .
\end{equation}
This and the next paragraph motivates our interest in four dimensions.

In dimension $d=3$, nothing is known rigorously about the $\nu$
problem. The existence of $\nu$ is not proved.  It is not known that
self-avoiding walk moves away from the origin faster than simple
random walk, $\Ebold_{n} |\omega(n)|^2 \ge n$, nor is it known that
self-avoiding walk is slower than ballistic, $\Ebold_{n} |\omega(n)/n|^2
\rightarrow 0$.  In dimension $d=2$, there is the same basic lack of
control as in $d=3$, but the good news is that there is a candidate
for the scaling limit, which tells us that if $\nu$ exists it should
be equal to $3/4$.  In \cite{LSW04}, the process known as ${\rm
SLE}_{8/3}$ is identified as the scaling limit of self-avoiding walk
subject to the unproven hypothesis that the scaling limit exists and
is conformally invariant.

SLE is a breakthrough discovery because it is provides a comprehensive
list of possible scaling limits in $d=2$.  It has separated off the
issues of existence of limits and universality and made it possible to
study candidate limits without first having proved they are limits.
On the other hand, theoretical physicists have a profound calculus
called the \emph{Renormalisation Group} (RG) that naturally explains
when different models are in the same universality class and that can
also prove the existence of limits.  We will follow this path. RG, in
the form that we will develop, was largely invented by Ken Wilson
\cite{Wil71,WiKo74,Wil83}.  RG as a rigorous tool originated with
\cite{BCG+80,GaKu83}.  Later developments are reviewed
in \cite{Bryd09}.  The hierarchical lattices mentioned earlier have
special properties that greatly simplify RG.  The $n (\log n)^{1/4}$
growth of \eqref{e:d4logs} has been shown to hold for continuous-time
weakly self-avoiding walk on a four dimensional hierarchical lattice
in \cite{BEI92,BI03c,BI03d}. Very recently, the corresponding Abelian
$\nu$ problem has been solved in \cite{HO10} for a
\emph{discrete-time} model on the hierarchical lattice.

\subsection{Continuous-time weakly self-avoiding walk and the main
result} \label{sec:main1}
We now describe a probability law on a space of maps $X:[0,T]
\rightarrow \Zd$.  We use the word ``time'' for the parameter $t \in
[0,T]$, but as for the discrete-time case there is a different space
and law for each $T$.  It is not a stochastic process which reveals
more about itself as ``time'' advances, so it is better to think of the
interval $[0,T]$ as a continuous parametrisation of a path in $\Zd$.

Fix a dimension $d \ge 4$.  Let $X$ be the continuous-time simple
random walk on $\Zd$ with ${\rm Exp}(1)$ holding times and
right-continuous sample paths.  In other words, the walk takes its
nearest neighbour steps at the events of a rate-1 Poisson process. Let
$P_{a}$ and $E_a$ be the probability law and the expectation for this
process started in $X(0)=a$.  The local time at $x$ up to time $T$ is
given by
\begin{equation}
    L_{x,T} = \int_0^T \1_{X(s)=x}\,ds
    ,
\end{equation}
and we can measure the amount of self-intersection experienced by
$X$ up to time $T$ by
\begin{align}
    I (0,T)
    &=
    \int_0^T ds_1  \int_0^T ds_2
    \1_{X(s_1)= X(s_2)}
    \nnb
    &=
    \int_0^T ds_1  \int_0^T ds_2
    \sum_{x\in\Zd} \1_{X(s_1)=x} \1_{X(s_2)=x}
    =
    \sum_{x\in\Zd} L_{x,T}^2
    .
\end{align}
Then, for $g>0$, $e^{-g I (0,T)}$ is our substitute for the indicator
function supported on self-avoiding $X$. For $g>0$, we define a new
probability law
\begin{equation}
    P_{g,a} (A)
    =
    E_{a} (e^{-g I (0,T)}\1_{A})
    /
    E_{a} (e^{-g I (0,T)})
\end{equation}
on measurable subsets $A$ of the set of all maps $X:[0,T]\rightarrow
\Zd$ with $X (0)=a$. For this model there is a $\nu$
problem\footnote{solved on the hierarchical lattice for $g$ small in
\cite{BEI92,BI03c,BI03d}}, but only the Abelian $\nu$ problem for
$\Zbold^{d}$ is currently within the reach of the methods of this
paper.

The continuous-time weakly self-avoiding walk two-point function is
defined by
\begin{equation}
\label{e:Gwsaw}
    G_{g,\nu}(a,b)
    =
    \int_0^\infty
    E_{a} (e^{-gI (0,T)}\1_{X(T)=b})
    e^{- \nu T}
    dT,
\end{equation}
where $\nu$ is a parameter (possibly negative) which is chosen in such
a way that the integral converges.  For $p \ge 0$ define
\begin{equation}
    \label{e:chi}
    \chi_{g}^{(p)}(\nu) =   \sum_{b\in \Z^d}  G_{g,\nu}(a,b) |b-a|^{p}.
\end{equation}
By subadditivity, c.f. \cite{MS93}, there exists $\nu_{c}=\nu_{c}(g)$ such
that $\chi_g^{(0)}(\nu) < \infty$ if and only if $\nu > \nu_{c}$.  We
call this $\nu_{c} $ the \emph{critical value of $\nu$}. Our main result
is the following theorem.

\begin{theorem}
\label{thm:wsaw4} Let $d \ge 4$.  There exists $g_{\mathrm{max}}>0$
such that for each $g \in [0,g_{\mathrm{max}}]$ there exists $c_g>0$
such that as $|\pp-\qq|\to \infty$,
\eq
\lbeq{Gasy}
    G_{g,\nu_{c}(g)}(a,b) = \frac{c_g}{|\pp-\qq|^{d-2}}\left( 1 + o(1)\right).
\en
\end{theorem}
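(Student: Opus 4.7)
The plan is to prove Theorem~\ref{thm:wsaw4} by combining the supersymmetric integral representation of $G_{g,\nu}(a,b)$ with a rigorous Wilsonian renormalisation group (RG) analysis performed scale by scale. The starting point is the representation (as summarised in \cite{BIS09}) which rewrites
\eq
G_{g,\nu}(a,b) \;=\; \int \bar{\phi}_a \phi_b \; e^{-V_0(\Lambda)} \, d\mu_{C}(\phi,\bar{\phi},\psi,\bar{\psi}),
\en
for a bosonic-fermionic Gaussian measure $\mu_C$ with covariance $C=(-\Delta_{\Zd}+m^2)^{-1}$ and a local polynomial interaction $V_0 = \sum_{x}\bigl(g \tau_x^2 + \nu \tau_x\bigr)$ where $\tau_x = \phi_x\bar\phi_x + \psi_x\bar\psi_x$. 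Supersymmetry ensures that the self-intersection local time is correctly reproduced, so the only analytic task is to evaluate this integral at criticality. First I would work on a sequence of periodic approximations $\Lambda_N = \Zbold^d/L^N\Zbold^d$ with $L$ a large fixed integer, with a small mass $m^2>0$ that is sent to zero at the end, and prove bounds uniform in $N$.

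Next I would apply a finite-range decomposition $C = \sum_{j=1}^{N+1} C_j$ in which $C_j$ is positive-definite, is supported on pairs $|x-y|\le \tfrac{1}{2}L^j$, and scales with $j$ in the expected way for the lattice Laplacian. Progressive integration $Z_{j+1}=\Ebold_{C_{j+1}} Z_j$ starting from $Z_0 = e^{-V_0(\Lambda)}$ is the engine of the RG. At each scale I would write, for polymer-like collections of blocks of side $L^j$,
\eq
Z_j \;=\; \sum_{X \subset \Lambda_N} e^{-V_j(\Lambda\setminus X)} \, K_j(X),
\en
with $V_j = g_j \tau^2 + \nu_j \tau + z_j \tau_{\Delta}$ a local polynomial carrying \emph{running} coupling constants $(g_j,\nu_j,z_j)$ and a new term $z_j\tau_\Delta$ arising from wave function renormalisation. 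The key step is to show that one can choose the split $(V_j,K_j)\mapsto (V_{j+1},K_{j+1})$, by subtracting the local expectation $\LT \Ebold_{C_{j+1}}(\cdot)$ from the interaction, so that the coupling constants obey a perturbative flow
\eq
g_{j+1} = \gpt(g_j) + \text{(small)}, \qquad \nu_{j+1} = \nupt(g_j,\nu_j) + \text{(small)}, \qquad z_{j+1} = \zpt(g_j,z_j) + \text{(small)},
\en
with the non-perturbative remainders $K_j$ contractive in a carefully designed norm that combines a large-field regulator with a block-decay factor. Here $\gpt$ is the standard one-loop recursion, which in $d=4$ gives $g_j \sim (\beta j)^{-1}$ and in $d>4$ gives $g_j \to 0$ geometrically.

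Having set up the scale-by-scale map, I would construct the critical $\nu_c(g)$ as the unique initial value $\nu_0$ such that $(\nu_j)$ remains bounded, i.e.\ that lies on the stable manifold of the RG fixed point. This is the standard tuning argument, implemented through a contraction-mapping/inverse-function reasoning on the sequence space: the unstable direction is $\nu$ (a relevant coupling), and choosing $\nu_0=\nu_c(g)$ kills it. On the critical trajectory, $z_j$ converges to a finite limit $z_\infty(g)$, which plays the role of the field-strength renormalisation. To extract the pointwise asymptotics \refeq{Gasy}, I would follow the observable $\bar\phi_a\phi_b$ through the RG by adding a source term and differentiating, so that the two-point function at scale $N$ becomes a single-scale Gaussian expectation with covariance $(1+z_\infty(g))\bigl(-\Delta\bigr)^{-1}$, plus error terms controlled by the smallness of $K_j$. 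Sending $m^2\downarrow 0$ and $N\to\infty$, one matches this to the massless free two-point function, giving $c_g = (1+z_\infty(g))^{-1}\cdot c_0$ and the claimed $|\pp-\qq|^{-(d-2)}$ decay.

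The main obstacle, and the bulk of the work in \cite{BS11}, is the control of the non-perturbative remainders $K_j$ in $d=4$: because $g$ is marginal, one must absorb cubic and higher interactions generated by the Gaussian expectations into the local polynomial $V_j$ with extremely tight control of the errors, over infinitely many scales. The required norm on $K_j$ must be strong enough to dominate large field regions (using the Gaussian regulator) yet stable under convolution with $\mu_{C_{j+1}}$, stable under the $\LT$-subtraction, and small enough that the perturbative flow dominates. Verifying this simultaneous set of properties, which hinges on the supersymmetric cancellations and on a good choice of the finite-range decomposition, is the hard technical heart of the argument.
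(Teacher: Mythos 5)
Your proposal follows essentially the same route as the paper: the supersymmetric integral representation on a torus with a small mass, a finite-range covariance decomposition driving progressive integration, a polymer-style splitting into a running local interaction $(g_j,\nu_j,z_j)$ plus a contractive remainder $K_j$, the stable-manifold tuning of $\nu_c(g)$, and the tracking of the observable $\bar\varphi_a\varphi_b$ via an external source. The only place the paper is more specific is in the final extraction, where the asymptotics accumulate in the coefficient $q_j$ of $\sigmab\sigma$ through the flow $q_{j+1}=q_j+\lambda_j^2 C_{j+1}(a,b)+r_{q,j}$, with the finite-range property forcing all contributions to vanish below the coalescence scale of $a$ and $b$, so that $q_\infty\sim\lambda_\infty^2(-\Delta)^{-1}(a,b)$; this is a sharper version of your ``single-scale Gaussian expectation'' step, but the approach is the same.
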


This is the analogue of \eqref{e:critical-G} in Theorem~\ref{thm:d5},
but now including dimension $d=4$.  There are no log corrections.  Log
corrections are only expected in the singular behaviour of
$\chi_{g}^{(p)}(\nu)$ as $\nu \downarrow \nu_{c}$ for $p\ge 0$. The
case $g=0$ is a standard fact about simple random walk; our proof
is given for case $g>0$.

\section{Finite volume approximation}
\label{sec:fv}

In this section we describe the first step in our proof, which is to
approximate the \emph{infinite volume} $\Zd$ by \emph{finite volume},
namely a discrete torus.

We do not make explicit the dependence on $g$, which is fixed and
positive. Let $R \ge 3$ be an integer, and let $\Lambda = \Z^d/ R\Z^d$
denote the discrete torus of side $R$.  For $a,b\in \Lambda$, let
\begin{equation}
\label{e:GwsawLambda}
    G_{\Lambda,\nu}(a,b)
    =
    \int_0^\infty
    E_{a,\Lambda} \left(
    e^{-gI(0,T)}
    \1_{X(T)=b} \right)
    e^{- \nu T}
    dT,
\end{equation}
where $E_{a,\Lambda}$ denotes the continuous-time simple random walk
on $\Lambda$, started from $a$.  The following theorem shows that it
is possible to study the critical two-point function in the double
limit in which first $\Lambda \uparrow \Zd$ and then $\nu \downarrow
\nu_{c}$.  We will follow this route, focusing our analysis on the
subcritical finite volume model with sufficient uniformity to take the
limits.

\begin{theorem}
\label{thm:Glims}
Let $ d \ge 1$ and $\nu \ge  \nu_{c}$.  Then
\eq
\lbeq{Givlc}
    G_\nu(a,b)
    =
    \lim_{\nu' \downarrow \nu} \lim_{\Lambda \uparrow \Zd}
    G_{\Lambda,\nu'}(a,b).
\en
\end{theorem}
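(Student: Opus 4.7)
The plan is to establish the two limits in sequence: first the inner limit $\Lambda \uparrow \Zd$ for fixed $\nu' > \nu_c$, and then the outer limit $\nu' \downarrow \nu$ by monotone convergence. The outer limit is routine; the content lies in the inner limit, which I would attack by comparing the torus walk to the projected $\Zd$ walk and controlling the mismatch between the two self-intersection functionals.

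\emph{Setup.} Lift $a \in \Lambda$ to $a_0 \in \Zd$ and let $X$ be continuous-time simple random walk on $\Zd$ started from $a_0$. Under the canonical projection $\pi \colon \Zd \to \Lambda$, the process $Y = \pi X$ has the law of the torus walk started at $a$, and $\{Y_T = b\} = \bigsqcup_{b' \in \pi^{-1}(b)} \{X_T = b'\}$. The local times and self-intersections satisfy
\[
    L^\Lambda_{y,T}(Y) = \sum_{x' \in \pi^{-1}(y)} L_{x',T}(X),
    \qquad
    I_\Lambda(Y;0,T) := \sum_{y \in \Lambda} L^\Lambda_{y,T}(Y)^2 \;\ge\; I(X;0,T),
\]
with equality exactly when $\pi$ is injective on the range $X([0,T])$. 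Consequently,
\[
    G_{\Lambda,\nu'}(a,b) = \sum_{b' \in \pi^{-1}(b)} \int_0^\infty E_{a_0}\bigl[ e^{-g I_\Lambda(\pi X;0,T)}\,\1_{X_T=b'}\bigr]\, e^{-\nu' T}\, dT.
\]

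\emph{Inner limit.} Fix $\nu' > \nu_c$. Choose the lift $b_0 \in \pi^{-1}(b)$ minimising $|b_0 - a_0|$; for $R$ large every other lift satisfies $|b' - a_0| \ge R/2$. Applying $I_\Lambda \ge I$ termwise gives the upper bound
\[
    G_{\Lambda,\nu'}(a,b) \;\le\; G_{\nu'}(a_0,b_0) \;+\; \sum_{b' \in \pi^{-1}(b) \setminus \{b_0\}} G_{\nu'}(a_0,b'),
\]
and the tail is bounded by $\sum_{|x-a_0|\ge R/2} G_{\nu'}(a_0,x)$, which tends to $0$ as $R \to \infty$ since $\chi^{(0)}_g(\nu') < \infty$. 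For the matching lower bound, introduce $A_T^R = \{\sup_{s \le T}|X_s - a_0| < R/3\}$. On $A_T^R$ the map $\pi$ is injective on $X([0,T])$, so $I_\Lambda(\pi X) = I(X)$; and once $R > 3|b_0 - a_0|$ the events $\{Y_T = b\}\cap A_T^R$ and $\{X_T = b_0\}\cap A_T^R$ coincide. Hence
\[
    G_{\Lambda,\nu'}(a,b) \;\ge\; \int_0^\infty E_{a_0}\bigl[ e^{-g I(X;0,T)}\,\1_{X_T = b_0}\,\1_{A_T^R}\bigr]\, e^{-\nu' T}\, dT.
\]
As $R \to \infty$, $\1_{A_T^R} \uparrow 1$ pointwise, and dominated convergence (with dominant $E_{a_0}[e^{-gI(0,T)}\1_{X_T=b_0}]\,e^{-\nu' T}$, integrable to $G_{\nu'}(a_0,b_0) < \infty$) yields convergence to $G_{\nu'}(a_0,b_0) = G_{\nu'}(a,b)$ by translation invariance. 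Sandwiching gives $\lim_{\Lambda \uparrow \Zd} G_{\Lambda,\nu'}(a,b) = G_{\nu'}(a,b)$.

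\emph{Outer limit and main obstacle.} For $\nu \ge \nu_c$ and $\nu' \downarrow \nu$, $e^{-\nu' T} \uparrow e^{-\nu T}$, so monotone convergence applied to the nonnegative integrand of \eqref{e:Gwsaw} gives $G_{\nu'}(a,b) \uparrow G_\nu(a,b)$, finite or infinite. Combining with the inner limit yields \eqref{e:Givlc}. The delicate step is the inner one: because $I_\Lambda \ge I$ runs in the wrong direction, no pointwise domination of $G_{\Lambda,\nu'}$ by $G_{\nu'}$ is available, and one is forced to separate off the closest lift $b_0$ and dispose of the remote lifts using summability of $G_{\nu'}(a_0,\cdot)$; this is the single point in the argument where the strict subcriticality $\nu' > \nu_c$ is genuinely used.
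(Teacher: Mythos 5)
Your argument is correct, and the key points are all in place: the domination $I_\Lambda(\pi X)\ge I(X)$ gives the upper bound after separating the nearest lift $b_0$ and killing the remote lifts via $\chi^{(0)}_g(\nu')<\infty$, while the confinement event $A_T^R$ gives the matching lower bound, and monotone convergence handles the outer limit at $\nu\ge\nu_c$. The paper itself states Theorem~\ref{thm:Glims} without proof (deferring to \cite{BS11}), and your unfolding/periodisation argument is the standard route taken there, with subcriticality $\nu'>\nu_c$ entering exactly where you say it does.
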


\section{Integral representation}
\label{sec:intrep}

The next step in the proof is to represent the two-point function in
finite volume by an integral that we will approximate by a Gaussian
integral.

Recall that $\Lambda$ denotes a discrete torus in $\Zd$.  Given
$\varphi \in \Cbold^{\Lambda}$ and writing $\varphi = (\varphi_x),\,x
\in \Lambda$, we write $d\varphi_x$ and $d\bar\varphi_x$ for the
differentials, we fix a choice of the square root $\sqrt{2\pi i}$, and
we set
\begin{equation}
    \psi_x = \frac{1}{\sqrt{2\pi i}} d\varphi_x,
    \quad
    \bar\psi_x = \frac{1}{\sqrt{2\pi i}} d\bar\varphi_x.
\end{equation}
Define the differential forms
\begin{equation}
    \tau_x
    = \varphi_x \bar\varphi_x
    + \psi_x  \wedge \bar\psi_x
    \quad\quad (x \in \Lambda),
\end{equation}
and
\begin{equation}
\label{e:addDelta}
    \tau_{\Delta,x}
    =
    \frac 12 \Big(
    \varphi_{x} (- \Delta \bar{\varphi})_{x} + (- \Delta \varphi)_{x} \bar{\varphi}_{x} +
    \psi_{x} \wedge (- \Delta \bar{\psi})_{x} + (- \Delta \psi)_{x} \wedge \bar{\psi}_{x}
    \Big),
\end{equation}
where $\Delta$ is the lattice Laplacian on $\Lambda$ defined by
$\Delta \varphi_{x} = \sum_{y: |y-x|=1} (\varphi_{y} - \varphi_{x})$,
and $\wedge$ is the standard wedge product.
From now on, for differential forms $u,v$, we will abbreviate by
writing $uv = u\wedge v$.  In particular $\psi_{x} \psi_{y} = -
\psi_{y} \psi_{x}$ and likewise $\psib_{x}$ anticommutes with
$\psib_{y}$ and with $\psi_{y}$.  The proof of the following
proposition is given in \cite{BEI92,BI03d}; see also \cite{BIS09} for
a self-contained proof.

\begin{prop}
\label{prop:Grep} Given $g>0$, let $\nu$ be such that
$G_{\Lambda,\nu}(a,b)$ is finite. Then
\begin{equation}
\label{e:Grep1}
    G_{\Lambda,\nu}(a,b)
    =
    \int_{\Cbold^{\Lambda}}
    e^{-\sum_{x\in\Lambda}(\tau_{\Delta ,x} + g \tau_x^2 + \nu \tau_x)}
    \bar\varphi_{\pp} \varphi_{\qq}
    .
\end{equation}
\end{prop}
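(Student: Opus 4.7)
My plan is to derive \eqref{e:Grep1} from a supersymmetric BFS--Dynkin isomorphism: for $A = -\Delta + \nu$ (viewed as an operator on $\C^\Lambda$) and any sufficiently well-behaved even-degree function $F$ of $\tau=(\tau_x)_{x\in\Lambda}$,
\begin{equation}
\int_{\C^\Lambda} e^{-(\varphi, A\bar\varphi) - (\psi, A\bar\psi)} F(\tau)\, \bar\varphi_\pp \varphi_\qq
= \int_0^\infty E_{\pp,\Lambda}\bigl[F\bigl((L_{x,T})_{x\in\Lambda}\bigr)\,\1_{X(T)=\qq}\bigr]\, e^{-\nu T}\, dT.
\end{equation}
Granting this identity, the proposition is essentially bookkeeping: summing \eqref{e:addDelta} over $x\in\Lambda$ and using the symmetry of $\Delta$, one checks $\sum_x \tau_{\Delta,x} = (\varphi, -\Delta\bar\varphi) + (\psi, -\Delta\bar\psi)$, while the mass term $\nu\sum_x \tau_x$ supplies the identity contribution to $A$, so the quadratic part of the exponent in \eqref{e:Grep1} is precisely $(\varphi, A\bar\varphi) + (\psi, A\bar\psi)$. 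The remaining factor $\exp(-g\sum_x \tau_x^2)$ plays the role of $F(\tau)$; under the isomorphism it becomes $\exp(-g\sum_x L_{x,T}^2) = e^{-gI(0,T)}$, recovering \eqref{e:Gwsaw}.

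To establish the isomorphism itself I would proceed in three standard moves. First, a Hubbard--Stratonovich trick writes
\[
e^{-g\sum_x \tau_x^2} = \prod_{x\in\Lambda}\int_\R \frac{d\sigma_x}{\sqrt{4\pi g}}\, e^{-\sigma_x^2/(4g) + i\sigma_x\tau_x},
\]
reducing, via Fubini, to the purely quadratic statement with $A$ replaced by the diagonal perturbation $A_\sigma = A + i\,\mathrm{diag}(\sigma)$. Second, the free super-Gaussian computation gives the clean resolvent identity
\[
\int_{\C^\Lambda} e^{-(\varphi, A_\sigma \bar\varphi) - (\psi, A_\sigma \bar\psi)}\, \bar\varphi_\pp \varphi_\qq = A_\sigma^{-1}(\pp,\qq),
\]
with \emph{no} determinantal prefactor: this is the supersymmetric cancellation in which the bosonic Gaussian normalisation $\det(A_\sigma)^{-1}$ is exactly compensated by the Berezin top-form contribution of the fermionic sector, thanks to the choice of $\sqrt{2\pi i}$ in the definition of $\psi,\bar\psi$. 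Third, one recognises the resolvent kernel as a Laplace transform,
\[
A_\sigma^{-1}(\pp,\qq) = \int_0^\infty e^{-\nu T}\, E_{\pp,\Lambda}\bigl[e^{-i\sum_x \sigma_x L_{x,T}}\,\1_{X(T)=\qq}\bigr]\, dT,
\]
by applying Feynman--Kac to the generator $\Delta - i\,\mathrm{diag}(\sigma)$. Performing the inverse Hubbard--Stratonovich integration in $\sigma$ inside the expectation turns $e^{-i\sum_x \sigma_x L_{x,T}}$ back into $e^{-g\sum_x L_{x,T}^2}$, finishing the identification.

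The main technical obstacle is the handling of the differential-form calculus together with the justification of the various interchanges. Concretely, $\int_{\C^\Lambda}$ must be interpreted as extraction of the top-degree component in the Grassmann variables $\psi,\bar\psi$ followed by Lebesgue integration over $\varphi,\bar\varphi$, and one must check that the exchange of the $\sigma$-integration and the super-Gaussian integration is legitimate in absolute value. Convergence on both sides is covered by the hypothesis that $\nu$ lies in the regime where $G_{\Lambda,\nu}(\pp,\qq)$ is finite. These verifications are exactly what is carried out in the self-contained treatment \cite{BIS09} and the earlier proofs in \cite{BEI92,BI03d}, and my approach would follow theirs.
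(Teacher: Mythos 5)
The paper does not actually prove Proposition~\ref{prop:Grep}: it defers to \cite{BEI92,BI03d} and to the self-contained treatment in \cite{BIS09}. Your outline follows essentially the route of those references: the supersymmetric Gaussian integral yields the resolvent $A_\sigma^{-1}(\pp,\qq)$ with no determinant, Feynman--Kac identifies the resolvent of $-\Delta+\nu+v$ (diagonal $v$) as a Laplace transform over local times, and a Fourier/Hubbard--Stratonovich representation of $e^{-g\tau^2}$ reduces the quartic interaction to the linear-in-$\tau$ case. Your bookkeeping is also correct: summing \eqref{e:addDelta} and using the symmetry of $\Delta$ does give $\sum_x\tau_{\Delta,x}=(\varphi,-\Delta\bar\varphi)+(\psi,-\Delta\bar\psi)$, and $e^{-g\sum_x\tau_x^2}$ does transmute into $e^{-gI(0,T)}$.

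The one step you gloss over too quickly is the justification of the interchange. You claim convergence ``is covered by the hypothesis that $G_{\Lambda,\nu}(\pp,\qq)$ is finite,'' but on the finite torus this hypothesis admits $\nu\le 0$ --- indeed every $\nu\in\Rbold$ once $g>0$, since $I(0,T)\ge T^2/|\Lambda|$ forces convergence of the $T$-integral --- and for $\nu\le 0$ the matrix $A_\sigma=-\Delta+\nu+i\,\mathrm{diag}(\sigma)$ fails to have positive-definite Hermitian part. The intermediate super-Gaussian integral then does not converge absolutely for any $\sigma$, so the Fubini interchange with the $\sigma$-integral cannot be justified ``in absolute value'' as you propose; only the full integrand with the quartic term present is integrable. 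The standard repair, carried out in the cited references, is to prove the identity first for $\nu$ large enough that everything converges absolutely, and then extend to the whole range of finiteness by analytic continuation in $\nu$, both sides being analytic where they converge locally uniformly. With that step supplied, your sketch is a faithful account of the proof the paper points to.
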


The definition of an integral such as the right-hand side of
\refeq{Grep1} is as follows:
\begin{enumerate}
\item Expand the entire integrand in a power series about its
degree-zero part (this is a \emph{finite} sum due to the
anti-commutativity of the wedge product, and the order of factors in
the resulting products is immaterial due to the even degree), e.g.,
\begin{equation}
    e^{-\nu \tau_{x} }
    = e^{-\nu \varphi_{x}\varphib_{x} - \frac{1}{2\pi i} d\varphi_{x} d\bar\varphi_{x}}
    =  e^{-\nu \varphi_{x}\varphib_{x}}
    \left(1 - \frac{1}{2\pi i} d\varphi_{x} d\bar\varphi_{x}\right).
\end{equation}
In general, any function of the differentials is defined by its formal
power series about its degree-zero part.
\item Keep only terms with one factor $\psi_x$ and one $\bar\psi_x$
for each $x\in\Lambda$, write $\varphi_x = u_x + i v_x$,
$\bar\varphi_x = u_x - i v_x$ and similarly for the differentials.
\item Rearrange the differentials to $\prod_{x\in\Lambda} du_x dv_x$,
using the anti-commutativity of the wedge product.
\item Finally, perform the Lebesgue integral over $\R^{2|\Lambda|}$.
\end{enumerate}
This is explained in more detail in \cite{BIS09}.  These integrals
have the remarkable self-normalisation property that
\begin{equation}
\label{self-norm}
    \int
    e^{
    -\sum_{x\in\Lambda}(a_{x}\tau_{\Delta ,x} + b_{x} \tau_x^2 + c_{x} \tau_x)
    }
    =
    1,
    \quad\quad
    a_{x}\ge 0,\, b_{x}> 0,\, c_{x} \in \Rbold,\, x\in \Lambda
    .
\end{equation}
Self-contained proofs of this, and of generalisations, can be found in
\cite{BIS09}.  The variables $\varphi_{x}$ and the forms $\psi_{x}$
are called \emph{fields}.

\section{Quadratic or Gaussian approximation}

The integral representation of Proposition~\ref{prop:Grep} opens a
natural route for approximation by non-interacting walk with different
parameters. To do this we split the exponent $\tau_{\Delta ,x} + g
\tau_x^2 + \nu \tau_x$ in \eqref{e:Grep1} into a part which is
quadratic in the variables $\varphi$ and a remainder.  When the
remainder is ignored the rest of the integral becomes Gaussian and the
Gaussian integral represents a non-interacting walk.  It is important
not to assume that the best approximation is the quadratic terms
$\tau_{\Delta ,x} + \nu \tau_x$.  We even want to allow
$\tau_{\Delta}$ to be divided up.  To see what a different coefficient
in front of $\tau_{\Delta}$ means we make the change of variable
$\varphi_{x} \mapsto \sqrt{1+z_{0}}\varphi_{x}$, with  $z_{0} >
-1$.  This gives
\begin{equation}
    G_{\Lambda,\nu}(a,b)
    =
    (1+z_{0})\int_{\Cbold^{\Lambda}}
    e^{-\sum_{x\in\Lambda}\big(
    (1+z_{0})\tau_{\Delta ,x} + g (1+z_{0})^{2} \tau_x^2 + \nu (1+z_{0})\tau_x
    \big)}
    \bar\varphi_{\pp} \varphi_{\qq}
    ,
\end{equation}
where the Jacobian is contained in the transformation of $\psi ,
\psib$. Then, for any $m^{2}\ge 0$, simple algebra allows us to
rewrite this as
\begin{equation}
    \label{e:z-indep}
    G_{\Lambda,\nu}(a,b)
    =
    (1+z_{0})
    \int
    e^{-S (\Lambda) - \tilde{V}_{0} (\Lambda)
    }
    \bar\varphi_{\pp} \varphi_{\qq}
    ,
\end{equation}
where
\begin{align}
    \label{e:Sdef}
    &
    S (\Lambda)
    =
    \sum_{x\in\Lambda}
    \big(\tau_{\Delta ,x} + m^{2} \tau_x \big)
    ,
    \\
    \label{e:Vtil0def}
    &
    \tilde{V}_{0} (\Lambda)
    =
    \sum_{x\in\Lambda}
    \big(g_{0} \tau_x^2 + \nu_{0} \tau_x + z_{0}\tau_{\Delta ,x}\big)
    ,
    \\
    \label{e:zmdef}
    &
    g_{0} = (1+z_{0})^{2} g,
    \quad \quad
    \nu_{0} = (1+z_{0}) \nu_{c},
    \quad \quad
    m^{2} = (1+z_{0}) (\nu - \nu_{c})
    ,
\end{align}
and $\nu_{c}$ was defined below \eqref{e:chi}. The two-point function
$G_{\Lambda,\nu}(a,b)$ in \eqref{e:z-indep} does not depend on
$(z_{0},m^{2})$ so, in the next theorem, these are free parameters
that do not get fixed until Section~\ref{sec:focc}.  In view of
Theorem~\ref{thm:Glims} and Proposition~\ref{prop:Grep}, to prove
Theorem~\ref{thm:wsaw4} it suffices to prove the following theorem.

\begin{theorem}
\label{thm:wsaw4rep} Let $d\ge 4$.  There exists $g_{\mathrm{max}}>0$
such that for each $g \in [0,g_{\mathrm{max}}]$ there exist $c
(g) >0$ such that as $|\pp-\qq|\to \infty$,
\begin{equation}
\lbeq{eta0}
    \lim_{\nu \downarrow \nu_{c}}
    \lim_{\Lambda \uparrow \Zd}
    (1+z_{0})
    \int_{\Cbold^{\Lambda}}
    e^{-S (\Lambda) - \tilde{V}_{0} (\Lambda)}
    \bar\varphi_{\pp} \varphi_{\qq}
    =
    \frac{c (g)}{|\pp-\qq|^{d-2}}\left( 1 + o(1)\right).
\end{equation}
\end{theorem}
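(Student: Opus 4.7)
The plan is to evaluate the integral on the left-hand side of \refeq{eta0} by a rigorous renormalisation group (RG) argument in the spirit of the references cited in the introduction. The first step is to decompose the Gaussian covariance $(-\Delta_\Lambda + m^{2})^{-1}$ associated with $S(\Lambda)$ as a sum $\sum_{j=0}^{N}C_{j}$ of positive-definite, finite-range covariances, with $C_{j}$ of range proportional to $L^{j}$ for a fixed large block scale $L$, and $N=N(\Lambda)$ chosen so that $L^{N}$ exceeds the diameter of $\Lambda$. Since the Gaussian integral factorises along this decomposition, the bosonic-fermionic integral in \refeq{eta0} can be computed by \emph{progressive integration}: at each scale $j$ one convolves with the scale-$j$ Gaussian, passing from an effective integrand $Z_{j}$ to $Z_{j+1}=\Ebold_{C_{j+1}}Z_{j}$. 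Supersymmetry, in the form of the self-normalisation identity \eqref{self-norm}, ensures that the normalising constants are trivial and that the only contribution of the interaction is through the ratio built into \refeq{Grep1}.

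\textbf{RG flow and running couplings.} After integrating out the first $j$ scales one represents the integrand in the form $Z_{j}=e^{-V_{j}(\Lambda)}\bigl(1+K_{j}(\Lambda)\bigr)$, where $V_{j}$ is a local polynomial in the fields of the same shape as $\tilde V_{0}$, parameterised by three running couplings $(g_{j},\nu_{j},z_{j})$, and $K_{j}$ is a nonperturbative remainder that factorises over disjoint scale-$j$ polymers. The RG map $(V_{j},K_{j})\mapsto (V_{j+1},K_{j+1})$ is built by Gaussian integration against $C_{j+1}$ followed by the localisation operator $\LT$: the local, power-counting relevant and marginal parts of the fluctuation integral are absorbed into the update of the couplings (producing the beta function), and what remains, made irrelevant by the $\LT$-subtraction, is placed in $K_{j+1}$.

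\textbf{Tuning of critical parameters and the main obstacle.} The heart of the argument is to equip $K_{j}$ with Banach norms for which the RG map is a contraction on the $K$-coordinate as long as the couplings stay in a small ball. Perturbation theory gives $g_{j+1}=g_{j}-\beta_{j}g_{j}^{2}+O\bigl(g_{j}^{3}+\|K_{j}\|\bigr)$, with $\beta_{j}$ bounded below by a positive constant only for $d=4$ (the marginal case), yielding the logarithmic decay $g_{j}\sim 1/(\beta j)$; for $d>4$ the quartic coupling is irrelevant and decays geometrically. The couplings $\nu_{j},z_{j}$ are relevant and must be tuned. A stable-manifold / contraction-mapping argument shows that for each sufficiently small $g$ there is a unique choice of the free parameters $(z_{0},\nu_{c}(g))$, coupled through \eqref{e:zmdef}, that keeps $(V_{j},K_{j})$ in the small ball for all $j\le N$ and all $m^{2}\ge 0$. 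This construction is the main obstacle: one must design norms that simultaneously control large-field behaviour, track the smoothness required by $\LT$ at all scales, and are compatible with the supersymmetric integration by parts that cancels the divergences from the $\tau_{\Delta,x}$ and $\tau_{x}$ terms; this is where the bosonic variables $\varphi$ and the fermionic forms $\psi$ play essentially linked roles.

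\textbf{Extraction of the two-point function.} To recover $\bar\varphi_{\pp}\varphi_{\qq}$ I would enlarge the interaction by source terms $\sigmaa\bar\varphi_{\pp}+\sigmab\varphi_{\qq}$ with Grassmann parameters $\sigmaa,\sigmab$ and extract \refeq{eta0} as the coefficient of $\sigmaa\sigmab$. Carrying these observables through the RG introduces one additional relevant coordinate, a field-strength renormalisation $\lambda_{j}$, which also flows to a finite limit $\lambda_{\infty}(g)>0$ on the stable manifold. After integrating all $N$ scales, the $\sigmaa\sigmab$-coefficient of $Z_{N}$ is, up to a contribution from $K_{N}$ that is small uniformly in $\Lambda$ and $m^{2}$, equal to $\lambda_{\infty}(g)\,(-\Delta_{\Lambda}+m^{2})^{-1}(\pp,\qq)$. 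Taking first $\Lambda\uparrow\Zd$ (so that $N\to\infty$) and then $m^{2}=(1+z_{0})(\nu-\nu_{c})\downarrow 0$, the massive lattice Green's function converges to the massless one, whose asymptotics on $\Zd$ ($d\ge 4$) are the classical $c_{d}|\pp-\qq|^{-(d-2)}(1+o(1))$. Setting $c(g)=(1+z_{0})\lambda_{\infty}(g)c_{d}>0$ yields \refeq{eta0} and hence, via Theorem~\ref{thm:Glims} and Proposition~\ref{prop:Grep}, Theorem~\ref{thm:wsaw4}.
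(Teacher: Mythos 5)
Your proposal follows essentially the same route as the paper: finite-range decomposition of $(m^{2}-\Delta)^{-1}$, progressive integration with a polymer-factorised $(V_{j},K_{j})$ coordinate system, localisation of relevant/marginal parts into the running couplings, a stable-manifold tuning of $(z_{0},\nu_{0})$, and an observable coupling $\lambda_{j}$ whose limit multiplies the massless lattice Green's function. The only discrepancies are cosmetic: the paper takes $\sigma$ to be a commuting external field rather than Grassmann, tracks the accumulated observable contribution as an explicit coupling $q_{j}$, and the prefactor of $(-\Delta)^{-1}(\pp,\qq)$ is $\lambda_{\infty}^{2}$ (one factor of $\lambda$ per insertion point) rather than $\lambda_{\infty}$.
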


To prove Theorem~\ref{thm:wsaw4rep}, we study the integral on the
left-hand side via a renormalisation group analysis, without making
further direct reference to its connection with self-avoiding walks.
In order to calculate this integral we define, for $\sigma \in
\Cbold$,
\begin{equation}
    \label{e:V0def}
    V_{0} (\Lambda )
    =
    \tilde{V}_{0} (\Lambda)
    +
    \sigma \varphib_{\pp} + \sigmab \varphi_{\qq}
\end{equation}
and use
\begin{equation}
    \label{e:generating-fn}
    \int_{\Cbold^{\Lambda}}
    e^{-S (\Lambda) - \tilde{V}_{0} (\Lambda)}
    \bar\varphi_{\pp} \varphi_{\qq}
    =
    -\left.
    \frac{\partial }{\partial \sigma }
    \frac{\partial }{\partial \sigmab }
    \right|_{0}
    \int_{\Cbold^{\Lambda}}
    e^{-S (\Lambda) - V_{0} (\Lambda)}
    .
\end{equation}
We will call $\sigma$ an \emph{external field}.

\section{Forms and test functions}

In this section we introduce notation for handling the differential
forms that appear in Theorem~\ref{thm:wsaw4rep}. We will write
\emph{form} in place of ``differential forms'' from now on.  We focus
on dimension $d = 4$, but leave $d$ in various formulas since $4$ can
also appear for other reasons.

\subsection{The space $\Ncal$}

A form is a polynomial in $\psi ,\psib$
with coefficients that are functions of $(\varphi,\sigma ) \in
\Cbold^{\Lambda}\times \Cbold$.

Given $\sigma \in \Cbold$ we define $\sigma_{1}=\sigma$ and
$\sigma_{2}=\sigmab$ so that $\sigma$ can be identified with a
function $\sigma :\{1,2 \}\rightarrow \Cbold$.  Similarly, let
$\Lambda_2 = \Lambda \times \{1,2\}$ so that given $\varphi \in
\Cbold^{\Lambda}$ we have the function on $x = (s,i) \in \Lambda_{2}$
defined by
\begin{equation}
    \phi_{x}
    =
    \begin{cases}
    \varphi_{s}&i=1,\\
    \varphib_{s}&i=2.
    \end{cases}
\end{equation}
Since $\phi$ and $\varphi$ are in one to one correspondence and since
we are only interested in functions on $\Lambda_{2}$ that arise from
some $\varphi$ we write $\phi \in \Cbold^{\Lambda}$.

Forms are elements of the algebra $\Ncal$ whose generators are the
degree one forms $(\psi_{x} ,\psib_{x},\, x \in \Lambda)$ subject to
the relations that all generators mutually anticommute.  For $x =
(s,i) \in \Lambda_{2}$, we write
\begin{equation}
    \psi_{x}
    =
    \begin{cases}
    \psi_{s}&i=1,\\
    \psib_{s}&i=2.
    \end{cases}
\end{equation}
Then we introduce the space $\Lambda^{*} = \cup_{q=0}^{\infty}
\Lambda_{2}^{q}$ of all sequences in $\Lambda_{2}$ with finitely many
terms so that every monomial in $\psi$ can be written in the form, for
some $y\in \Lambda^{*}$,
\begin{equation}
    \psi^y = \begin{cases}
    1 & \text{if $q=0$}
    \\
    \psi_{y_1}\cdots \psi_{y_q} & \text{if $q \geq 1$.}
    \end{cases}
\end{equation}
The $q=0$ term in $\Lambda^{*}$ is a set consisting of a single
element called the ``empty sequence'', which by definition has length
zero.  Given a sequence $y \in \Lambda^{*}$, $q = q(y)$ is the
length of the sequence and $y!  = q (y)!$.  Every element of $\Ncal$
has the form
\begin{equation}
    \label{e:K}
    F
    =
    F (\phi ,\sigma)
    =
    \sum_{y \in \Lambda^{*}} \frac{1}{y!}F_y (\phi ,\sigma) \psi^y
    .
\end{equation}
Given $x=(x_1,\ldots, x_p)\in \Lambda_2^p$ and $z=(z_1,\ldots,z_r) \in
\{1,2\}^r$, we write
\begin{equation}
    F_{x,y,z}(\phi,\sigma )
    = \frac{\partial^p }{\partial \phi_{x_p} \cdots  \partial \phi_{x_1}}
    \frac{\partial^r }{\partial \sigma_{z_r} \cdots  \partial \sigma_{z_1}}
    F_y(\phi,\sigma).
\end{equation}
For $X \subset \Lambda$, we define $\Ncal (X)$, which is a subspace of
$\Ncal$, by
\begin{equation}
\label{e:9NXdef}
    \Ncal (X) = \{ F \in \Ncal : F_{x,y}=0 \; \text{if any component of $x,y$
    is not in $X$}\}.
\end{equation}
For example $\tau_{x} \in \Ncal (\{x \})$ and $\tau_{\Delta ,x} \in
\Ncal (X)$ where $X=\{y:|y-x|\le 1 \}$.

By introducing
\begin{equation}
    \phi^y = \begin{cases}
    1 & \text{if $q=0$}
    \\
    \phi_{y_1}\cdots \phi_{y_q} & \text{if $q \geq 1$.}
    \end{cases}
\end{equation}
we write the formal Taylor expansion of $F(\phi + \xi)$ in powers
of $\xi$ and $\sigma$ as
\begin{equation}
\label{e:fTexp}
    \sum_{x,y \in \Lambda^*, z\in \{1,2\}^*}
    \frac{1}{x!y!z!}
    F_{x,y,z} (\phi,0) \xi^{x} \psi^y \sigma^z.
\end{equation}
Functions $f:\Lambda^* \times \Lambda^* \times \{1,2\}^* \rightarrow
\Cbold $ are called \emph{test functions}.  We define a pairing
between elements of $\Ncal$ and the set of test functions as follows:
for a test function $f$, for $\phi \in \Cbold^{\Lambda}$, let
\begin{equation}
    \label{e:pairing}
    \pair{F, f}_\phi
    = \sum_{x,y \in \Lambda^*, z\in \{1,2\}^*}
    \frac{1}{x!y!z!}F_{x,y,z}(\phi,0) f_{x,y,z}.
\end{equation}

\subsection{Local polynomials and localisation}
\label{sec:lml}

For a function $f:\Lambda \rightarrow \Cbold$ and $e$ a unit vector in
$\Zd$ we define the \emph{finite difference derivative} $(\nabla_{e}
f)_{x} = f (x+e) - f (x)$.  Repeated differences such as
$(\nabla_{e}\nabla_{e'} f)_{x}$ are called \emph{derivatives}.

A \emph{local monomial} is a product of finitely many fields and
derivatives of fields such as $M = \psi \psib \nabla_{e}\varphib$.
Using this example to introduce a general notation, given $x \in
\Lambda$ let $M_{x}= \psi_{x} \psib_{x} (\nabla_{e}\varphib)_{x}$,
and given $X\subset \Lambda$ let $M (X) = \sum_{x\in X}M_{x}$.
\emph{Local polynomials} are finite sums of local monomials with
constant coefficients.

An important example of a local polynomial is
\begin{equation}
    \label{e:Vdef}
    V = g\tau^2 + \nu\tau + z\tau_{\Delta,x} +
    \lambda \1_{a}\sigmab \varphi + \lambda \1_{b} \sigma \varphib + q\sigmab \sigma
    ,
\end{equation}
which extends the local polynomial of \refeq{V0def} by the addition of
the term $q\sigmab \sigma$.
The indicator function
$\1_{a}:\Lambda \rightarrow \{0,1 \}$ equals $1$ when evaluated
on $a$ and is zero otherwise. The parameters $(g,\nu ,z,\lambda,q)$
are called \emph{coupling constants}.

\emph{Euclidean symmetry:} The lattice $\Zd$ has automorphisms $E:\Zd
\rightarrow \Zd$.  An example for $d=1$ is $Ex = 1 - x$.  By letting
an automorphism $E$ act on the spatial labels on fields,
$\varphi_{x} \mapsto \varphi_{Ex}$, $E$ induces an action, $E:\Ncal
\rightarrow \Ncal$.  A local polynomial $P$ is \emph{Euclidean
invariant} if automorphisms of $\Zd$ that fix $x$ also fix $P_{x}$.
For example, $\psi \psib \nabla_{e}\varphib$ is not Euclidean
invariant because there is a reflection that changes $\varphi_{x+e}$
into $\varphi_{x-e}$ so that $(\nabla_{e}\varphib)_{x} \mapsto
(\nabla_{-e}\varphib)_{x}$.  On the other hand, the term
$\tau_{\Delta}$ in \eqref{e:Vdef} is a
Euclidean invariant local monomial.

\emph{Gauge invariance:} A local polynomial is gauge invariant if it
is invariant under the \emph{gauge flow}: $(\sigmaa,\varphi )
\rightarrow (e^{i\theta}\sigmaa, e^{i\theta}\varphi)$. Thus $V$ of
\eqref{e:Vdef} is gauge invariant.

\emph{Supersymmetry:} There is an antiderivation $\hat{Q}:\Ncal
\rightarrow \Ncal$ characterised by
\begin{align}
    &
    \hat{Q}\varphi_{x} = \psi_{x},
    \quad \quad
    \hat{Q}\psi_{x} = - \varphi_{x},
    &
    \hat{Q}\varphib_{x} = \psib_{x},
    \quad \quad
    \hat{Q}\psib_{x} = \varphib_{x}
    .
\end{align}
An element of $F \in\Ncal$ is said to be \emph{supersymmetric} if
$\hat{Q}F=0$.  The terms $\tau , \tau_{\Delta}, \tau^{2}$ in $V$ are
supersymmetric local monomials. The forms $\sigmab \varphi, \sigma
\varphib, \sigmab \sigma$ are gauge invariant, but not
supersymmetric. It is straightforward to check that $\hat{Q}^{2}$
generates the gauge flow.  Therefore supersymmetry implies gauge
invariance.  Further details can be found in \cite{BIS09}.

The pairing \eqref{e:pairing} defines $F \in \Ncal$ as a linear
function, $f \mapsto \pair{F,f}_{0}$, on test functions. The
subscript means that we set $\phi =0$. Let $\Phipol$ be a set of test
functions. Two elements $F_{1}$ and $F_{2}$ of $\Ncal$ are equivalent
when they define the same linear function on $\Phipol$. We say they
are \emph{separated} if they are not equivalent.

\begin{example}
Let $\Phipol$ be the set of test functions that are linear in their
$\Lambda$ arguments. Fix a point $k \in \Zd$. Let $F = \varphi_{k}$,
and let $F' = \varphi_{0} + (k \cdot \nabla \varphi)_{0}$. Then $F$
and $F'$ are equivalent because a linear test function $f (x) = a
+b\cdot x$ cannot separate them:
\begin{equation}
    \pair{F,f}
    =
    a + b\cdot k,
    \quad \quad
    \pair{F',f}
    =
    a + k \cdot \nabla f
    =
    a + b\cdot k.
\end{equation}
To avoid confusion let us emphasise that two different contexts for
``polynomial'' are in use: a test functions can be a polynomial in
$x\in \Lambda$, while local polynomials are polynomial in fields.
\end{example}

The choice for $\Phipol$ in this example is not the one we want. The
details in the definition given below are less important than the
objective of the definition, which is that $\Phipol$ should be a
minimal space of test functions that separates the terms in \eqref{e:Vdef}.

We define $\Phipol$ to be the set of test functions $f (x,y,z)$ that
are polynomial in the $\Lambda$ arguments of $(x,y) \in \Lambda^*
\times \Lambda^*$ with restrictions on degree listed below.  For $f
\in \Phipol$, as a polynomial in the $x,y$ components in $\Lambda$:
\begin{enumerate}
\item The restriction of $f$ to $(x,y,z)$ with $r(z)=0$ has total
degree at most $d-p (x)[\phi]-q (y)[\phi]$;  $f (x,y,z)=0$ when $d-p
(x)[\phi] - q (y)[\phi] < 0$. Here
\begin{equation}\label{e:dimphidef}
    [\phi] = (d-2)/2
    .
\end{equation}
For dimension $d=4$, $[\phi] = 1$.
\item The restriction of $f$ to $(x,y,z)$ with $r(z)=r \in\{1,2\}$ has
total degree at most $r - p (x) - q(y)$; $f (x,y,z)=0$ if $r-p (x) -
q(y) < 0$ or $r>2$.
\end{enumerate}
Let $\Vcal$ be the vector space of gauge invariant local polynomials
that are separated by $\Phipol$ and, for $X\subset \Lambda$, let
$\Vcal (X) = \{P (X):P\in \Vcal \}$. The following proposition
associates to any form $F \in \Ncal$ an equivalent local polynomial in
$\Vcal (X)$ \cite{BS11}.

\begin{prop}
\label{prop:9LTdef} For $X \subset \Zd $ there exists a linear map
$\LTbar_{X}:\Ncal \rightarrow \Vcal (X)$ such that
\begin{align}
    &(a)\quad
    \pair{\LTbar_{X} F, f}_0 = \pair{F,f}_0
    \quad
    \text{for $f \in \Phipol$, $F \in \Ncal$},
    \\
    &(b)\quad
    E\big(\LTbar_{X} F\big) = \LTbar_{EX} (EF) \quad
    \text{for automorphisms $E:\Zd \rightarrow \Zd$, $F \in \Ncal$},
    \\
    &(c)\quad
    \LTbar_{X'}\circ \LTbar_X = \LTbar_{X'} \quad
    \text{for $X,X' \subset \Lambda$}
    .
\end{align}
\end{prop}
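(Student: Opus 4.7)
The plan is to construct $\LTbar_X$ explicitly by Taylor-expanding the pairing $\pair{F,f}_0$, reducing it to a finite-dimensional datum, and then matching this datum with the pairing of a uniquely determined element of $\Vcal(X)$. For each $X \subset \Zd$ I would first choose a reference site $x_X \in X$ equivariantly under Euclidean automorphisms of $\Zd$, so that $E x_X = x_{EX}$ (for the blocks used later in the RG analysis, a canonical corner or centroid works). This equivariance is what will eventually deliver property (b).

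Given $F \in \Ncal$, the pairing $\pair{F,f}_0$ of \eqref{e:pairing} is $\sum_{x,y,z} \frac{1}{x!y!z!} F_{x,y,z}(0,0)\, f_{x,y,z}$. By the degree restrictions in the definition of $\Phipol$, for $f \in \Phipol$ the values $f_{x,y,z}$ are polynomial in the $\Lambda$-components of $(x,y)$ with fixed total-degree bounds, so they are determined by finitely many derivatives at $x_X$. Introduce a Taylor operator $\Tay_{x_X}$ on test functions that expands the $\Lambda$-arguments about $x_X$ and truncates at the admissible degree; since any $f \in \Phipol$ is already a polynomial of that degree, $\Tay_{x_X} f = f$, so $\pair{F,f}_0 = \pair{F, \Tay_{x_X} f}_0$. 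The right-hand side is a linear combination of derivatives of $f$ at $x_X$ whose coefficients are built from $F$; using that the degree bounds in $\Phipol$ are calibrated to the gauge-invariant monomials appearing in \eqref{e:Vdef}, I would show that these coefficients can be realised, uniquely, as those of $\pair{P(X), \cdot}_0$ for some $P \in \Vcal$. Setting $\LTbar_X F = P(X)$ then gives property (a) by construction.

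Property (c) is then immediate from (a) together with the defining separation property of $\Vcal$: both $\LTbar_{X'}(\LTbar_X F)$ and $\LTbar_{X'} F$ lie in $\Vcal(X')$ and have the same $\Phipol$-pairing (namely $\pair{F,f}_0$), so they coincide. Property (b) reduces to checking that Taylor expansion, the dualisation into fields at $x_X$, the gauge-invariant projection, and the reference-site rule are all Euclidean-equivariant, each of which is built in. I expect the main obstacle to be the existence part of (a): one must show that the finite-dimensional Taylor datum extracted from $F$ really is matched by an element of $\Vcal(X)$, i.e.\ by a local polynomial summed over $X$ rather than merely a polynomial anchored at $x_X$. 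This is exactly why $\Phipol$ carries the slightly intricate degree bounds $d - p(x)[\phi] - q(y)[\phi]$ for $r(z) = 0$ and $r - p(x) - q(y)$ for $r(z) \in \{1,2\}$: these bounds are calibrated so that the linear map $P \mapsto \pair{P(X), \cdot}_0$ from $\Vcal$ into the $\Phipol$-dual surjects onto the image of $F \mapsto \pair{F,\cdot}_0|_{\Phipol}$, making the matching possible and the image land in $\Vcal(X)$. Verifying this calibration is the delicate bookkeeping step, and once it is done the proposition follows.
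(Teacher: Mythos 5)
The paper itself does not prove this proposition: it is stated with a citation to \cite{BS11}, so there is no in-paper argument to compare against. Your strategy --- realise $\LTbar_{X}$ as the adjoint of a degree-truncating Taylor operator on test functions, get (a) by construction, (c) from (a) plus uniqueness, and (b) from equivariance --- is the right one in outline and is consistent with the construction in the companion work. Your reductions of (b) and (c) are sound, but note that once you have uniqueness (injectivity of $P \mapsto \pair{P(X),\cdot}_0$ restricted to $\Phipol$ on $\Vcal$), property (b) follows directly from the facts that $E$ preserves $\Phipol$ and maps $\Vcal(X)$ to $\Vcal(EX)$, with no reference point needed. That is the cleaner route, and it matters: for general $X \subset \Zd$ there is no Euclidean-equivariant choice of a single lattice site $x_X$ (a reflection can interchange the two points of a two-point set, and a centroid need not be a lattice point), so the reference-site rule cannot be made equivariant and should not be load-bearing for (b); it can only be a computational device whose output is independent of the choice.

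The genuine gap is the step you explicitly defer: showing that the finite-dimensional functional $f \mapsto \pair{F,f}_0$ on $\Phipol$ is realised, uniquely, by $\pair{P(X),\cdot}_0$ for some gauge-invariant local polynomial $P$, with the result genuinely in $\Vcal(X)$, i.e.\ a translation-invariant local polynomial summed over all of $X$ rather than data anchored at one point. This is not residual bookkeeping --- it is essentially the entire content of the proposition. One must construct a dual basis (of lattice monomials in the fields and their finite-difference derivatives, of total dimension at most $d$) for the truncated polynomial test functions, show that summation over $X$ rather than evaluation at $x_X$ still reproduces that dual basis (which forces re-expansion about varying base points and $1/|X|$-type normalisations), and verify injectivity uniformly in $X$, which is delicate for small or symmetric $X$. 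As written, the proposal correctly identifies this crux and the role of the degree bounds in the definition of $\Phipol$, but it asserts rather than proves the surjectivity/injectivity of the matching, so the existence claim of the proposition remains open in your argument.
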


Let $\Vcal_{H} \subset \Vcal$ be the subspace generated by monomials
that are not divisible by $\sigma$ or $\sigmab$, and let $\Vcal_{O}
\subset \Vcal$ be the subspace generated by monomials that are
divisible by $\sigma$ or $\sigmab$. Then $\Vcal = \Vcal_{H} \oplus
\Vcal_{O}$, and on this direct sum we define
\begin{equation}
    \label{e:LTdef}
    \LT_{X} = \LTbar_{X}\oplus \LTbar_{X\cap \{a,b \}}
    .
\end{equation}
Symmetry considerations for the integral representation restrict the
domain of $\LTbar$ in our applications so that its range reduces to
polynomials of the form $V$ as in \eqref{e:Vdef}.

\section{Gaussian integration}

\subsection{The super-expectation}
\label{sec:gaussian-integration}

For a $\Lambda \times \Lambda$ matrix $A$, we define
\begin{equation}
    S_{A} (\Lambda )
    =
    \sum_{x,y \in \Lambda}\Big(
    \varphi_{x} A_{xy} \varphib_{x} +
    \psi_{x} A_{xy} \psib_{y}
    \Big)
    .
\end{equation}
When $A = m^{2}-\Delta$ this is the same as $S (\Lambda)$ which was
defined in \eqref{e:Sdef}.  Let $C$ be a positive-definite $\Lambda
\times \Lambda$ matrix.  Then $A=C^{-1}$ exists. We introduce the
notation
\begin{equation}
    \Ebold_{C} F
    =
    \int_{\Cbold^{\Lambda}}
    e^{-S_A (\Lambda)} F,
\end{equation}
for $F$ a form in $\Ncal$.  The integral is defined as described under
Proposition~\ref{prop:Grep}.  We call $C$ the covariance because
$\Ebold_{C} \phib_{a} \phi_{b} = C_{ab}$. More generally, if $F$ is a
form of degree zero, i.e., a function of $\phi$, then $\Ebold_{C}F $ is
a standard Gaussian expectation for a complex valued random variable
$\phi$ with covariance $C$ \cite{BIS09}.

We define a space $\Ncal^\times$ in the same way as $\Ncal$ is
defined, but with $\phi$ doubled to $(\phi ,\xi)$ so that
$(\phi,\psi)$ doubles to the pair $(\phi,\psi), (\xi, \eta)$ with
$\eta = (2\pi i)^{-1/2} d \xi$.  The external field $\sigma $ is not
doubled.  We define $\theta: \Ncal \to \Ncal^\times$ by
\begin{equation}
\label{e:taudef}
    (\theta F)(\phi, \xi) =
    \sum_{y\in \Lambda^*} \frac{1}{y!} F_y(\phi+\xi) (\psi + \eta)^y.
\end{equation}
We write $\Ebold_{C} \theta F$ for the element of $\Ncal$ obtained
when the integral over $\Cbold^{\Lambda}$ in $\Ebold_{C}$ applies
\emph{only} to $(\xi,\eta)$.  In the general case where $F$ is a form
this is not standard probability theory, because $\Ebold_{C} \theta F$
takes values in $\Ncal$. To keep this in mind we call this a
\emph{super-expectation}.  The variables and forms $(\xi, \eta)$ that
are integrated out are called \emph{fluctuation fields}.

\subsection{Finite-range decomposition of covariance}
\label{sec:decomposition}

Suppose $C$ and $C_{j},\,j=1,\dots ,N,$ are positive-definite $\Lambda
\times \Lambda$ matrices such that
\begin{equation}
    C = \sum_{j=1}^N C_j
    .
\end{equation}
Let $C' = \sum_{k=2}^N C_k$.  Then, as in the standard theory of
Gaussian random variables, the $\Ebold_{C}$ expectation can be
performed progressively:
\begin{equation}
    \label{e:progressive}
    \Ebold_{C}F
    =
    \Ebold_{C'+C_{1}}F
    =
    \Ebold_{C'}\big(\Ebold_{C_{1}}\theta F\big)
    .
\end{equation}
For further details, see \cite{BS11}. 

From now on we work with $C = (m^{2} - \Delta)^{-1}$, where $\Delta$
is the finite difference Laplacian on the periodic lattice $\Lambda$.
Given any sufficiently large dyadic integer $L$, there exists a
decomposition $C =\sum_{j=1}^{N} C_{j}$ such that $C_{j}$ is
positive-definite and
\begin{equation}
    C_j(x,y) = 0 \quad \text{if} \quad \text{$|x-y| \ge L^j$}.
\end{equation}
This is called the \emph{finite range} property.  The existence of
such a decomposition is established in \cite{BGM04} for the case where
$\Lambda$ is replaced by $\Zd$.  In \cite[Lecture 2]{Bryd09} it is
briefly explained how the decomposition for the periodic $\Lambda$
case is obtained from the $\Zd$ case, for $\Lambda$ a torus of side
$L^{N}$.  To accommodate this restriction on the side of $\Lambda$ the
infinite volume limit in Theorem~\ref{thm:wsaw4rep} is taken with a
sequence of tori with sides $L^{N},\, N\in \Nbold$.

We conclude this section with an informal discussion of scaling
estimates that guide the proof.  Equation \eqref{e:progressive} says
that $F$, which depends on a field with covariance $C$, can be
replaced by $\Ebold_{C_{1}}\theta F$, which depends on a field
characterised by the covariance $C'$. Repeating this operation $j$
times will replace $F$ by a new $F$ that depends on a \emph{field at
scale $j$} characterised by the covariance $\sum_{k=j+1}^N C_k$.
According to estimates in \cite{BGM04}, this sum is dominated by the
first term which satisfies
\begin{equation}
    \label{e:scaling-estimate}
    |\nabla_{x}^\alpha \nabla_{y}^\beta C_{j+1} (x,y)|
    \le
    {\rm const} \, L^{-2j[\phi]-|\alpha |_{1}j-|\beta |_{1}j}
    ,
\end{equation}
where the symbol $[\phi]$, which is called the \emph{dimension} of the
field, was defined in \eqref{e:dimphidef}.  The typical field at scale
$j$ behaves like ``half a covariance,'' and in particular the standard
deviation of $\varphi_x$ is $\approx L^{-j[\phi]}$. Furthermore, the
estimate on derivatives in \eqref{e:scaling-estimate} says that
typical fields at scale $j$ are roughly constant over distances of
order $L^{j}$.

We can now explain why the terms in $V$ as defined by \eqref{e:Vdef}
play a pre-eminent role.  For a cube $B$ of side $L^{j}$, which
contains $L^{dj}$ points,
\begin{equation}
    \label{e:relevant-monomials}
    \sum_{x \in B} \varphi_{j,x}^p \approx L^{(d-p[\phi])j}
    .
\end{equation}
In the case of $d=4$, for which $[\phi]=1$, this scales down when
$p>4$ and $\varphi^{p}$ is said to be \emph{irrelevant}.  The power
$p=4$ neither decays nor grows, and is called \emph{marginal}.  Powers
$p<4$ grow with the scale, and are called \emph{relevant}.  Since the
derivatives in \eqref{e:scaling-estimate} provide powers of $L$, the
monomial $\varphi (-\Delta)\varphib$ is marginal. Thus $\tau
,\tau_{\Delta}, \tau^{2}$ are the supersymmetric marginal and relevant
monomials.

\subsection{Progressive integration}\label{sec:rg1}

To prove Theorem~\ref{thm:wsaw4rep} using \eqref{e:generating-fn} we
have to calculate
\begin{equation}
    \int_{\Cbold^{\Lambda}}
    e^{-S (\Lambda) - V_{0} (\Lambda)}
    =
    \Ebold_{C} e^{-V_{0} (\Lambda)}
    ,
\end{equation}
where $V_{0}$ is given by \eqref{e:V0def}. This $V_{0}$ equals $V$ as
defined in \eqref{e:Vdef}, with $(g,\nu ,z,\lambda,q)$ replaced by
$(g_{0},\nu_{0} ,z_{0},\lambda_{0},q_{0})$ with
\begin{equation}
    \label{e:initial-lq}
    q_{0}=0, \quad \quad \lambda_{0}=1
    .
\end{equation}

Sections~\ref{sec:gaussian-integration} and \ref{sec:decomposition}
have taught us that we can evaluate $\Ebold_{C} e^{-V_{0} (\Lambda)}$
by the following iteration: let
\begin{equation}
    \label{e:Z0def}
    Z_{0} = e^{-V_{0} (\Lambda)}.
\end{equation}
Inductively define $Z_{j},\, j=0,\dots ,N$, by
\begin{equation}
    Z_{j+1} = \Ebold_{C_{j+1}}\theta Z_{j}
    .
\end{equation}
Then
\begin{equation}
    \Ebold_{C}e^{-V_{0} (\Lambda)} =Z_{N}
    .
\end{equation}
Therefore the proof of Theorem~\ref{thm:wsaw4rep} now depends on the
analysis of the sequence $Z_{j}$.  Our proof will depend on showing
that the $Z_{j}$ simplify as $j$ increases. In fact, in the next
section we will see that they become more Gaussian, in the sense that
the $g\tau^{2}$ term becomes smaller. The index $j$ will be called a
\emph{scale}.

\section{Perturbation theory and flow equations}
\label{sec:perturb}

In this section we start to prove that $Z_{j}$ becomes more Gaussian
as $j$ increases. To do this we adapt to our particular setting a
perturbative calculation of the kind that appears in \cite{WiKo74}.

For $X \subset \Lambda$ and $V$ as defined in \eqref{e:Vdef},
define
\begin{equation}
    \label{e:Idef}
    I_{j,X}(V) = e^{-V(X)} \big( 1 + \half W_j(V,X) \big),
\end{equation}
where
\begin{equation}
    W_j(V,X) = (1-\LT_{X}) F_{w_j}\big(V(X),V(\Lambda)\big)
\end{equation}
with
\begin{align}
    w_j &= \sum_{i=1}^j C_i,
    \\
\label{e:Fexpand1bis}
    F_{w_j} \big(V(X),V(\Lambda)\big)
    &  =
    \sum_{n\ge 1} \frac{1}{n!}
    \big(D_{R}^n  V(X)\big) w_j^n \big( D_{L}^n   V(\Lambda)\big) ;
\end{align}
the latter sum truncates at $n=4$ due to our quartic interaction.  The
symbols $D_{R}$ and $D_{L}$ denotes right and left differentiation
with respect to fields.  The ``left/right'' is to specify signs, but
this and the precise definition are not of immediate importance,
so we just give an example.

\begin{example}
\label{ex:W}
For $V = \psi \psib$ and $X=\{x \}$, $\big(D_R^n V(X)\big) w_j^n \big(
D_L^n V(\Lambda)\big)$ equals
\begin{equation}
    \begin{cases}
    \sum_{y \in \Lambda}
    \Big(
    \psi_{x}
    w_{j} (x,y)
    \psib_{y}
    +
    \psib_{x}
    w_{j} (x,y)
    \psi_{y}
    \Big)
    &n=1
    \\
    -\sum_{y \in \Lambda} w_{j}^{2} (x,y)
    &
    n=2.
    \end{cases}
\end{equation}
\end{example}
When $j=0$, $I_{j,X}(V) = e^{-V (\Lambda)}$ because
$w_{0}=0$. Therefore we can choose the coupling constants to make it
equal to $Z_{0}$. Furthermore, $I_{j,X}(V)$ has the martingale-like
property exhibited in Proposition~\ref{prop:I-action}, which says that
integrating out the fluctuation field $\xi_{j+1}$ is approximately the
same as changing the coupling constants in $V$ to new coupling
constants called $(\gpt,\nupt,\zpt,\lambdapt,\qpt)$. The formulas for
the new coupling constants are called \emph{perturbative flow
equations}.

\begin{prop}\label{prop:I-action}
As a formal power series in $(g,\nu,z,\lambda,q)$,
\begin{equation}
    \label{e:I-invariance}
    \Ebold_{C_{j+1}} I_{j,\Lambda} (V)
    =
    I_{j+1,\Lambda} (\Vpt)
    \mod (g,\nu,z,\lambda,q)^{3},
\end{equation}
where
\begin{equation}
    \label{e:Vptdef}
    \Vpt = \Vpt (V)
\end{equation}
has the same form \eqref{e:Vdef} as $V$, with
$(g,\nu,z,\lambda,q)$ replaced by
\begin{align}
    &
    \label{e:recursion-g-pt}
    \gpt = g - c_{g} g^{2} +
    \rpt_{g,j},
    \\
    &
    \label{e:recursion-mu-pt}
    \nupt = \nu + 2gC_{j+1} (0,0) +
    \rpt_{\nu,j },\\
    &
    \label{e:recursion-z-pt}
    \zpt = z +
    \rpt_{z,j},\\
    &
    \label{e:obs-flow-lambda-pt}
    \lambdapt
    =
    \left(
    1 +
    g \delta_{j}
    +
    \frac{1}{2}
    \mu L^{-2j}\sum_{y\in\Lambda}C_{j+1} (0,y)
    \right)
    \lambda
    ,
    \\
    &
    \label{e:obs-flow-q-pt}
    \qpt
    =
    q
    +
    \lambda^{2} \, C_{j+1} (\pp,\qq)
    ,
\end{align}
where $c_{g}>0$ and
\begin{gather}
    \label{e:mu-def}
    \mu = L^{2j}\Big(\nu  + 2g \sum_{k=j+1}^{N}C_{k}(0,0)\Big)
    ,
\end{gather}
and where $\rpt_{g,j}$, $\rpt_{\mu,j}$ and $\rpt_{z,j}$ are
computable uniformly bounded homogeneous polynomials of degree $2$ in
$(g,\mu,z)$.  There are $g^{2}$ terms in $\rpt_{g,j}$, but they are
summable in $j$ and therefore do not overpower $c_{g}g^{2}$;
$\delta_{j}$ is a summable sequence of positive numbers determined by
$C_{j}$.
\end{prop}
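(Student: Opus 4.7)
The plan is to compute $\Ebold_{C_{j+1}} I_{j,\Lambda}(V)$ perturbatively to second order in the coupling constants and match the result with $I_{j+1,\Lambda}(\Vpt)$. The key algebraic input is the standard Gaussian/Wick formula for the super-expectation: $\Ebold_{C_{j+1}}\theta F$ is computed by applying to $F$ all possible pairwise contractions of $\phi$-fields (and similarly $\psi$-fields) via the kernel $C_{j+1}$, with the fermionic contractions carrying the anticommutative signs enforced by the form structure. Expanding $I_{j,\Lambda}(V)$ as $1 - V(\Lambda) + \tfrac{1}{2}V(\Lambda)^{2} + \tfrac{1}{2}W_{j}(V,\Lambda) + O(V^{3})$, I would apply $\Ebold_{C_{j+1}}$ term by term. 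The linear piece $-\Ebold_{C_{j+1}}V(\Lambda)$ contributes ``tadpoles'' from single contractions inside $g\tau^{2}$ and $z\tau_{\Delta}$; the quadratic piece $\tfrac{1}{2}\Ebold_{C_{j+1}}V(\Lambda)^{2}$ produces $\tfrac{1}{2}V(\Lambda)^{2}$ plus the sum of all contractions between the two factors via $C_{j+1}$, which is exactly $\tfrac{1}{2}F_{C_{j+1}}(V(\Lambda),V(\Lambda))$. Combining this with $\tfrac{1}{2}W_{j}(V,\Lambda)$ and using linearity of $F_{\bullet}$ in its kernel produces $\tfrac{1}{2}(1-\LT_{\Lambda})F_{w_{j+1}}(V(\Lambda),V(\Lambda))$, which coincides with $\tfrac{1}{2}W_{j+1}(\Vpt,\Lambda)$ modulo $O(V^{3})$ since $\Vpt = V + O(V^{2})$. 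This is the mechanism by which $w_{j}$ is updated to $w_{j+1} = w_{j} + C_{j+1}$ inside $W_{j+1}$.

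The shift from $V$ to $\Vpt$ is then obtained by gathering all the local ($\LT$-image) contributions above into an equivalent local polynomial. Proposition~\ref{prop:9LTdef} places this polynomial in $\Vcal$, and Euclidean, gauge, and supersymmetric invariance further restrict it to the form \eqref{e:Vdef}, so the shift must be parametrised by changes in $(g,\nu,z,\lambda,q)$. The explicit coefficients are then read off: a single contraction inside $g\tau^{2}$ produces a $\tau$-coefficient proportional to $gC_{j+1}(0,0)$ with a Wick combinatorial factor from the square, yielding $\nupt = \nu + 2gC_{j+1}(0,0) + \rpt_{\nu,j}$; a double contraction of two $g\tau^{2}$ vertices gives an $\LT$-local $\tau^{2}$ piece of the form $-c_{g}g^{2}$ with $c_{g}$ essentially $\sum_{x}C_{j+1}(0,x)^{2}$, while its subleading Taylor coefficients inside $\LT$ generate the $\rpt_{z,j}$ contribution to $\zpt$. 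For the external-field couplings, the single contraction of $\sigmab\varphi_{\pp}$ against $\sigma\varphib_{\qq}$ via $C_{j+1}(\pp,\qq)$ gives $\qpt = q + \lambda^{2}C_{j+1}(\pp,\qq)$, while contractions between the $\lambda\sigmab\varphi_{\pp}$ vertex and the $\nu\tau$ and $g\tau^{2}$ vertices in the bulk, reorganised via the effective-mass variable $\mu$ of \eqref{e:mu-def}, produce the multiplicative correction \eqref{e:obs-flow-lambda-pt} for $\lambdapt$.

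The main obstacle is the combinatorial bookkeeping needed to verify that the local image of every second-order contraction lies in the family \eqref{e:Vdef}. Supersymmetry is essential here: it ensures that the boson and fermion tadpoles combine into the supersymmetric monomial $\tau$ rather than separate $\varphi\varphib$ and $\psi\psib$ contributions, and it eliminates the pure constants that would otherwise require a separate field-independent term in $V$. Euclidean invariance kills odd-derivative candidates such as $(\nabla_{e}\varphi)\varphib$ that could a priori arise from Taylor expansion inside $\LT$. Finally, the quantitative assertions on $\rpt_{g,j},\rpt_{\nu,j},\rpt_{z,j}$ and $\delta_{j}$, including summability in $j$ of the $g^{2}$ piece of $\rpt_{g,j}$ so it does not overpower the $-c_{g}g^{2}$ principal term, follow from the finite-range property and the scaling estimate \eqref{e:scaling-estimate} applied to $C_{j+1}$; these reduce to careful propagator bookkeeping but involve no new conceptual input beyond what has already been set up.
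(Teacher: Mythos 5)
Your overall strategy --- expand $I_{j,\Lambda}(V)$ to second order in the couplings, evaluate the super-expectation by Wick contraction with kernel $C_{j+1}$, and use $\LT$ together with supersymmetry, gauge and Euclidean invariance to identify the local part of the result with a shift $(g,\nu,z,\lambda,q)\mapsto(\gpt,\nupt,\zpt,\lambdapt,\qpt)$ --- is the standard one and is exactly the route the paper indicates (the paper states the proposition without proof, describing it as an adaptation of the perturbative calculation of \cite{WiKo74} and deferring details to \cite{BS11}). Your identification of the specific contractions behind each flow equation, and of the roles of supersymmetry (combining boson and fermion tadpoles into $\tau$ and killing constants) and of the finite-range and scaling estimates for the summability claims, is correct.

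There is, however, a genuine error in the step that produces $W_{j+1}$. You assert that $\tfrac12 W_j(V,\Lambda)+\tfrac12(1-\LT_\Lambda)F_{C_{j+1}}(V(\Lambda),V(\Lambda))$ equals $\tfrac12(1-\LT_\Lambda)F_{w_{j+1}}(V(\Lambda),V(\Lambda))$ ``using linearity of $F_{\bullet}$ in its kernel''. But by \eqref{e:Fexpand1bis} the map $w\mapsto F_w$ contains $w^n$ for $n$ up to $4$ and is not linear, so $F_{w_j}+F_{C_{j+1}}\neq F_{w_j+C_{j+1}}$: the discrepancy consists of the mixed terms $(D_R^nV)\,w_j^k C_{j+1}^{n-k}(D_L^nV)$ with $1\le k\le n-1$, which are second order in the coupling constants and nonlocal, hence can be neither discarded mod $(g,\nu,z,\lambda,q)^3$ nor absorbed by $\LT$ into $\Vpt$. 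As written, your computation would therefore fail to reproduce \eqref{e:I-invariance}. The missing terms come from a contribution you implicitly dropped: $\Ebold_{C_{j+1}}\theta\bigl[e^{-V(\Lambda)}\tfrac12 W_j(V,\Lambda)\bigr]$ does not merely return $\tfrac12 W_j$ at second order; the expectation also creates new $C_{j+1}$-contractions between the two $V$-factors already joined by $w_j$-lines inside $F_{w_j}(V,V)$, and the binomial expansion $(w_j+C_{j+1})^n=\sum_k\binom{n}{k}w_j^kC_{j+1}^{n-k}$ is precisely what matches these cross-contractions, together with the connected part of $\tfrac12\Ebold_{C_{j+1}}V^2$, to $F_{w_{j+1}}(V,V)$. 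A smaller imprecision: the double $\tau^2$--$\tau^2$ contraction yields a coefficient proportional to $\sum_x\bigl(w_{j+1}(0,x)^2-w_j(0,x)^2\bigr)$, which contains the cross term $2\sum_xw_j(0,x)C_{j+1}(0,x)$ and not only $\sum_xC_{j+1}(0,x)^2$; the constant $c_g$ is the $j$-independent limit of this sequence, with the summable $j$-dependent difference relegated to $\rpt_{g,j}$, which is how the statement's remark about the $g^2$ terms in $\rpt_{g,j}$ should be read.
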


\paragraph{The $\beta$ function.} The right hand side of
\eqref{e:recursion-g-pt} is known as the \emph{$\beta$ function}. The
simpler recursion obtained by setting $\rpt_{\nu,j }=0$, namely
\begin{equation}
\label{e:recursion-gbar}
    \gbar_{j+1} = \gbar_j - c_{g}\gbar_j^2,
    \quad\quad
    \gbar_0=g_{0},
\end{equation}
creates a sequence $\gbar_{j}$ that tends to zero like $j^{-1}$ as
$j\rightarrow \infty$.  The sequence $Z_{j}$
becomes more Gaussian due to the famous observation, known as
\emph{infra-red asymptotic freedom}, that \eqref{e:recursion-gbar}
controls the behaviour of the more complex recursion of
Proposition~\ref{prop:I-action} and drives the $\tau^2$ term to zero.

\section{The renormalisation group map}

The problem with the second order perturbative calculation in
Section~\ref{sec:perturb} is that the error is not only of order $3$
in the coupling constants. It is also at least of order $3$ in
$L^{-dj}|\Lambda|$.  In fact there will also be an $\exp (O
(L^{-dj}|\Lambda |))$ in this error!  We will call this the
\emph{$\exp (O (|\Lambda |))$ problem}.  The remedy is not to work
with $I_{j} (\Lambda)$, but with $\prod_{B\subset \Lambda}I_{j} (B)$
where $B$ is a cube and the allowed cubes pave $\Lambda$.  The idea is
that by choosing the side of $B$ to be bigger than the range of
$C_{j+1}$, we can take advantage of independence of cubes that do not
touch to more or less use our perturbation theory with $\Lambda$
replaced by individual cubes. This idea requires a systematic
organisation which we describe in this section.

\subsection{Scales and the circle product}

Let $L\ge 3$ be an integer.  Let $R=L^N$, and let $\Lambda = \Zd /
(R\Zd)$.

\begin{definition}
\label{def:blocks}
(a) \emph{Blocks.}
For each $j=0,1,\ldots,N$, the torus $\Lambda$ is paved in a natural
way by $L^{N-j}$ disjoint $d$-dimensional cubes of side $L^j$.  The cube
that contains the origin has the form (for $L$ odd)
\eq
    \{x\in \Lambda:  |x| \le \frac{1}{2} (L^{j}-1)\}
    ,
\en
and all the other cubes are translates of this one by vectors in
$L^{j} \Zd$.  We call these cubes $j$-{\em blocks}, or {\em blocks}
for short, and denote the set of $j$-blocks by ${\cal B}_j= {\cal
B}_j(\Lambda)$.
\\
(b) \emph{Polymers.}
A union of $j$-blocks is called a {\em polymer} or $j$-{\em
polymer}, and the set of $j$-polymers is denoted ${\cal P}_j={\cal
P}_j(\Lambda)$.  The size $|X|_j$ of $X\in {\cal P}_j$ is the number
of $j$-blocks in $X$.
\\
(c) \emph{Connectivity.}
A subset $X\subset \Lambda
$ is said to be
\emph{connected} if for any two points $x_{a}, x_{b}\in X$
there exists a path $( x_i,i=0,1,\dotsc n) \in X$
with $\|x_{{i+1}}-x_{i}\|_\infty =1$,
$x_{0} = x_{a}$ and $x_{n}=x_{b}$.  According to this definition, a
polymer can be decomposed into connected components; we write
$\Ccal(X)$ for the set of connected components of $X$.
We say that two polymers $X,Y$ \emph{do not touch} if
$\min\{\|x-y\|_\infty : x \in X, y \in Y\} >1$.
\\
(d)  \emph{Small sets.}
A polymer
$X\in \Pcal_{j}$ is
said to be a
\emph{small set} if $|X|_j \le 2^{d}$ and $X$ is
connected.
Let $\Scal_{j}$ be the set of all small sets in $\Pcal_{j}$. 
\\
(e) \emph{Small set neighbourhood.} 
For $X \subset \Lambda $ let
\begin{equation}
\label{e:9ssn}
    X^{*}
=
    \bigcup_{Y\in \Scal_{j}:X\cap Y \not =\varnothing } Y.
\end{equation}
\end{definition}

The \emph{polymers} of Definition~\ref{def:blocks} have nothing to do
with long chain molecules.  This concept has a long history in
statistical mechanics going back to the important paper
\cite{GrKu71}.

\begin{prop}\label{prop:factorisationE}
Suppose that $X_1,\ldots, X_n \in \Pcal_{j+1}$ do not touch each other
and let $F_i(X_i) \in \Ncal(X_i)$.  The expectation $\Ebold_{C_{j+1}}$
has the \emph{factorisation property}:
\begin{equation}
\label{e:Efaczz}
    \Ebold_{C_{j+1}} \prod_{i=1}^n F_i(X_i)
    =
    \prod_{m=1}^n \Ebold_{C_{j+1}} F_i(X_i).
\end{equation}
\end{prop}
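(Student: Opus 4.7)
The plan is to combine the finite-range property of $C_{j+1}$ with the general fact that a Gaussian super-expectation with block-diagonal covariance factorises. First, since the $(j+1)$-blocks of $\Bcal_{j+1}$ tile $\Lambda$ with side $L^{j+1}$, two distinct blocks of the tiling that do not touch must be separated by at least one full intervening block, so that $\|x-y\|_\infty \ge L^{j+1}$ for every $x$ in one and $y$ in the other. Since each $X_i$ is a union of $(j+1)$-blocks and the polymers are pairwise non-touching, this gives $|x-y| \ge L^{j+1}$ whenever $x \in X_i$, $y \in X_k$ with $i \ne k$. The finite-range property from Section~\ref{sec:decomposition} then forces $C_{j+1}(x,y) = 0$ on all such pairs, so setting $Y := \bigcup_{i=1}^n X_i$, the restriction $C_{j+1}|_{Y \times Y}$ is block-diagonal with blocks $C_{j+1}|_{X_i \times X_i}$.

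Next, I would invoke the marginal property of the super-expectation: since each $F_i$ lies in $\Ncal(X_i)$, the product $\prod_i F_i(X_i)$ depends only on fields located in $Y$, so integrating out fields outside $Y$ leaves a Gaussian super-integral on $Y$ with covariance $C_{j+1}|_{Y \times Y}$. For the bosonic sector this is the usual statement that the marginal of a complex multivariate Gaussian has covariance equal to the sub-matrix; for the fermionic sector the same restriction rule holds because the Grassmann Gaussian integral is determined entirely by its covariance $\Ebold_{C_{j+1}} \psi_x \psib_y = C_{j+1}(x,y)$, cf.\ \cite{BIS09}. With block-diagonality of $C_{j+1}|_{Y \times Y}$, the inverse $A = (C_{j+1}|_{Y \times Y})^{-1}$ is itself block-diagonal, the quadratic action $S_A(Y)$ splits as $\sum_i S_{A_i}(X_i)$ with $A_i = (C_{j+1}|_{X_i\times X_i})^{-1}$, and both the Lebesgue measure on $\Cbold^Y$ and the Grassmann generators factor accordingly. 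Applying the marginal property in reverse to each factor identifies it with $\Ebold_{C_{j+1}} F_i(X_i)$, yielding \refeq{Efaczz}.

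The only real subtlety is the marginalisation rule for the fermionic part, since $\Ebold_{C_{j+1}}$ is not a probability measure and the identity has to be interpreted through the differential-form construction of Section~\ref{sec:intrep}; however, this is part of the self-contained development of super-Gaussian integrals already recalled in Section~\ref{sec:gaussian-integration} and detailed in \cite{BIS09}. Once those inputs and the finite-range decomposition of Section~\ref{sec:decomposition} are in hand, the proposition is essentially immediate; there is no hard estimate to make, only the bookkeeping of disjoint supports and block-diagonal Gaussian factorisation.
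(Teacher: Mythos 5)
Your proposal is correct and follows essentially the same route as the paper: the paper's own proof is a two-sentence observation that vanishing off-diagonal covariance implies independence for Gaussians, that this generalises to the forms setting, and that the finite-range property of $C_{j+1}$ supplies the vanishing. Your write-up merely makes explicit the geometric point (non-touching unions of $(j+1)$-blocks are separated by at least $L^{j+1}$, not just by $2$) and the block-diagonal factorisation of the super-Gaussian integral, both of which the paper leaves implicit.
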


\begin{proof}
Gaussian random variables are independent if and only if the
off-diagonal part of their covariance matrix vanishes.  This
generalises to our forms setting, and so the proposition follows from
the finite range property of $C_{j+1}$.
\end{proof}

Given forms $F,G$ defined on ${\cal P}_j$,
let
\begin{equation}
    (F \circ G)(\Lambda) = \sum_{X\in {\cal P}_j} F(X) G(\Lambda \setminus X).
\end{equation}
This defines an associative product, which is also commutative
provided $F$ and $G$ both have even degree.

\subsection{The renormalisation group map}
\label{sec:rg-map}

Recall that we have defined $I_{j,X} (V)$ in \eqref{e:Idef}. Given a
yet-to-be-constructed sequence $V_{j}$, for $X \in {\cal P}_j$, let
\begin{equation}
    I_{j} (X)
    =
    \prod_{B\in \Bcal_{j}}I_{j,B} (V_{j})
    .
\end{equation}
We have defined $V_{0}$ in \eqref{e:V0def}.  Let $K_0(X) =
\1_{X=\varnothing}$.  Then the $Z_{0}$ defined in \eqref{e:Z0def} is
also given by
\begin{equation}
    Z_0 = I_0(\Lambda) = (I_0 \circ K_0)(\Lambda)
    ,
\end{equation}
because $I_{0,\Lambda} (\Lambda)=e^{-V_{0} (\Lambda)}$ since
$w_0=0$.

\begin{definition}\label{def:factorisationK}
We say that $K : \Pcal_j \to \Ncal$ has the
\emph{component factorisation property}
if
\begin{equation}
\label{e:Kfac}
    K (X)
=
    \prod_{Y \in \Ccal( X)}K (Y).
\end{equation}
\end{definition}

Suppose, inductively, that we have constructed $(V_{j},K_{j})$ where
$K_{j} : \Pcal_j \to \Ncal$ is such that
\begin{align}
    \label{e:Kaxioms}
    \begin{split}
    (i)\quad&
    Z_{j} = (I_{j} \circ K_{j})(\Lambda),
    \\
    (ii)\quad&
    \text{$K_{j}$ has the component factorisation property},
    \\
    (iii)\quad&
    \text{For $X \in \Pcal_{j}$, $K_{j}(X) \in \Ncal (X^{*})$}.
    \end{split}
\end{align}
Our objective is to define $(V_{j+1},K_{j+1})$, where $K_{j+1} :
\Pcal_{j+1} \to \Ncal$ has the same properties at scale $j+1$.  Then
the action of $\Ebold_{C_{j+1}}\theta$ on $Z_{j}$ has been expressed
as the map:
\begin{equation}
    (V_j, K_j) \mapsto (V_{j+1},K_{j+1})
    .
\end{equation}
This map will be constructed next.  We call it the
\emph{renormalisation group map}. Unlike $Z_{j} \mapsto \Ebold \theta
Z_{j}$ it is not linear, so this looks like a poor trade, but in fact
it is a good trade because the data $(V_{j},K_{j})$ is local, unlike
creatures such as $\exp (-V_{j} (\Lambda))$ in $Z_{j}$.  The component
factorisation property and Proposition~\ref{prop:factorisationE}
allows us to work with $K_{j}$ on the domain of all connected sets in
$\Pcal_{j}$.  We can prove that $K_{j} (X)$ is very small when the
number of blocks in $X$ is large; in fact, only the restriction of
$K_{j}$ to the small sets $\Scal_{j}$ plays an important role.

\section{The inductive step: construction of $V_{j+1}$}
\label{sec:Vjplus1}

In accordance with the program set out in Section~\ref{sec:rg-map} we
describe how $V_{j+1}$ is constructed, given $(V_{j},K_{j})$. Our
definition of $V_{j+1}$ will be shown to have an additional property
that there is an associated $K_{j+1}$, which, as a function of
$K_{j}$, is contractive in norms described in Section~\ref{sec:norms}.

Recall that the set $\Scal$ of small sets was given in
Definition~\ref{def:blocks}.  For $B\in \Bcal_{j}$ define $V_{j+1}$ to
be the local interaction determined by:
\begin{align}
    \label{e:Vjplus1}
    \begin{split}
    &
    \Vhat_{j} (B)
    =
    V_{j} (B)
    +
    \LT_{B}
    \sum_{Y \in \Scal, Y \supset B} \frac{1}{|Y|}  I_{j} (Y)^{-1} K_{j} (Y)
    ,
    \\
    &
    V_{j+1}
    = \Vpt (\Vhat_{j})
    ,
    \end{split}
\end{align}
where $\Vpt = \Vpt (V)$ with generic argument $V$ is defined in
\eqref{e:Vptdef}.  Recalling the discussion of ``relevant terms'' just
after \eqref{e:relevant-monomials}, one sees in \eqref{e:Vjplus1} that
$V_{j+1}$ has been defined so that relevant terms that would expand if
they remained inside $K_{j}$ are being absorbed into $V_{j+1}$.

We have completed the $V$ part of the inductive construction of the
sequence $(V_{j},K_{j})$.  Before discussing the $K$ induction we have
to define some norms so that we can state the contractive property.

\section{Norms for $K$}
\label{sec:norms}

Let $\h_j>0$ and $\s_j>0$.  For a test function $f$ as defined in
Section~\ref{sec:lml} we introduce a norm
\begin{equation}
\label{e:Phignorm}
    \|f\|_{\Phi_j}
    = \sup_{x,y \in \Lambda^*, z \in \{1,2\}^*}
    \sup_{|\alpha|_{\infty} \leq 3}
    \h_j^{-p-q} \s_j^{-r} L^{j|\alpha|_{1}} |\nabla^\alpha f_{x,y,z}|.
\end{equation}
Multiple derivatives up to order $3$ on each argument are specified by
the multi-index $\alpha$.  The gradient $\nabla$ represents the
finite-difference gradient, and the supremum is taken componentwise
over both the forward and backward gradients.  A test function $f$ is
required to have the property that $f_{x,y,z}=0$ whenever the sequence
$x$ has length $p>9$ or the sequence $z$ has length $r>2$; there is no
restriction on the length of $y$. By the definition of the norm, test
functions satisfy
\begin{equation}
\label{e:phisizebd}
    |\nabla^\alpha f_{x,y,z}|
    \leq
    \h_j^{p+q} \s_j^r L^{-j|\alpha|_{1}}\|f\|_{\Phi_j}.
\end{equation}
We discuss the choice of $\s_j$ in Section~\ref{sec:focc} when it
first plays a role, and here we focus on $\h_j$.  An important choice
is
\begin{equation}\label{e:ljdef}
    \h_j=\ell_{j}=\ell_{0}L^{-j[\phi]}
    ,
\end{equation}
for a given $\ell_0$.  The $L^{-j[\phi]}$ is there because unit norm
test functions of one variable should obey the same estimates as a
typical field, and test functions of more than one variable should
obey the estimates that a product of typical fields obeys.

Recall the pairing defined in \eqref{e:pairing} and, for $F \in
\Ncal$ and $\phi \in \C^\Lambda$, let
\begin{equation}
\label{e:Tdefnew}
    \|F\|_{T_{\phi,j}}
    =
    \sup_{g : \|g\|_{\Phi_j} \le 1}
    \left|\pair{F,g}_{\phi}\right|
    .
\end{equation}
The following proposition provides properties of this seminorm that
are well adapted to the control of $K$.

\begin{prop}
Let $F,F_1,F_2 \in \Ncal$.
The $T_\phi$ norm obeys the product property
\eq
    \|F_{1} F_{2} \|_{T_{\phi,j}}
    \le
    \|F_{1}\|_{T_{\phi,j}} \| F_{2} \|_{T_{\phi,j}},
\en
and, if $\ell_0$ is chosen large enough, the integration
property
\eq
    \|\Ebold_{C_{j+1}} F\|_{T_{\phi,j}(\h_j)}
    \le \Ebold_{C_{j+1}} \|F\|_{T_{\phi+\xi,j}(2\h_j)}.
\en
\end{prop}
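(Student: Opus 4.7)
The plan is to work straight from the duality definition of $\|\cdot\|_{T_{\phi,j}}$. I fix a test function $g$ with $\|g\|_{\Phi_j}\le 1$ and expand $\pair{F_1 F_2, g}_\phi$ via Leibniz. Because $(F_1 F_2)_y$ decomposes by shuffles of the fermionic index $y = y_1\sqcup y_2$, and the multi-derivatives $\partial^x_\phi$ and $\partial^z_\sigma$ split across the two factors by the ordinary Leibniz rule, one rewrites $\pair{F_1 F_2,g}_\phi$ as a finite sum, indexed by ordered partitions $(x,y,z)\mapsto(x_1,y_1,z_1),(x_2,y_2,z_2)$, of terms $\pair{F_1,g^{(1)}}_\phi\,\pair{F_2,g^{(2)}}_\phi$, where $g^{(1)},g^{(2)}$ are the test functions obtained from $g$ by restricting to the corresponding sequence components. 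The crucial point is that the weights $\h_j^{-(p+q)}\s_j^{-r}L^{j|\alpha|_1}$ in \eqref{e:Phignorm} are purely multiplicative under concatenation of sequences and addition of derivative multi-indices, so $\|g^{(1)}\|_{\Phi_j}\|g^{(2)}\|_{\Phi_j}\le \|g\|_{\Phi_j}$. Taking the supremum over $g$ and regrouping yields the product inequality.

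\textbf{Integration property.} The strategy is to pass the super-expectation through the duality pairing. Because $\Ebold_{C_{j+1}}$ commutes with differentiation in the non-integrated variables $\phi$ and $\sigma$, and $\theta$ implements the shifts $\phi\mapsto\phi+\xi$, $\psi\mapsto\psi+\eta$, one obtains the schematic identity $\pair{\Ebold_{C_{j+1}}\theta F,g}_\phi = \Ebold_{C_{j+1}}\pair{F,g^{\sharp}}_{\phi+\xi}$, where $g^{\sharp}=g^{\sharp}(\xi,\eta)$ is a \emph{linear} modification of $g$ that absorbs the contractions of the fluctuation fermions $\eta$ against themselves and against $\psi$ via $C_{j+1}$. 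Bringing the absolute value inside the expectation and applying the defining inequality of the seminorm, $|\pair{F,g^{\sharp}}_{\phi+\xi}|\le \|F\|_{T_{\phi+\xi,j}(2\h_j)}\|g^{\sharp}\|_{\Phi_j(2\h_j)}$, reduces the statement to the deterministic bound $\|g^{\sharp}\|_{\Phi_j(2\h_j)}\le \|g\|_{\Phi_j(\h_j)}$.

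\textbf{The main obstacle.} I expect this last reduction to be the delicate step. One unit of slack comes automatically: the weight $\h^{-(p+q)}$ is \emph{decreasing} in $\h$, so passing from $\h_j$ to $2\h_j$ contributes a factor $2^{-(p+q)}$, which is precisely what must swallow the combinatorial blow-up from the fluctuation-fermion contractions encoded in $g^{\sharp}$. Making this rigorous requires a term-by-term estimate on $g^{\sharp}$ using the finite-range scaling bound $|\nabla^\alpha C_{j+1}|\le O(L^{-2j[\phi]-j|\alpha|_1})$ from \eqref{e:scaling-estimate}. The role of the hypothesis ``$\ell_0$ chosen large enough'' is exactly to ensure that the resulting constants, once combined with $\h_j^{-(p+q)}=(\ell_0 L^{-j[\phi]})^{-(p+q)}$ and the derivative weights $L^{j|\alpha|_1}$, are dominated by the $2^{-(p+q)}$ slack uniformly in the scale $j$. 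Once this deterministic bound is in hand, taking the supremum over unit test functions $g$ concludes the proof.
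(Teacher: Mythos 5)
First, a point of comparison: the paper itself states this proposition without proof and simply defers to \cite{BS11}, so there is no in-paper argument to measure your attempt against; what follows assesses whether your outline would close into a proof. Your architecture --- duality with test functions, a Leibniz/shuffle decomposition for the product property, and commuting the super-expectation through the pairing while spending the factor $2^{-(p+q)}$ gained from $\h_j \mapsto 2\h_j$ for the integration property --- is the right one and matches the strategy of the cited companion work. Two steps, however, are genuine gaps rather than omitted routine details.

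For the product property, the inequality $\|g^{(1)}\|_{\Phi_j}\|g^{(2)}\|_{\Phi_j}\le\|g\|_{\Phi_j}$ is not meaningful as written: a general test function on concatenated sequences does not factor as $g^{(1)}\otimes g^{(2)}$, so there are no objects $g^{(1)},g^{(2)}$ to take norms of. The repair is standard but must be said: fix the second block of arguments, regard $g$ as a test function of the first block, pair it with $F_1$, and check that the output, as a function of the second block, is again a test function of $\Phi_j$-norm at most $\|F_1\|_{T_{\phi,j}}\|g\|_{\Phi_j}$; the multiplicativity of the weights $\h_j^{-p-q}\s_j^{-r}L^{j|\alpha|_1}$ under concatenation, which you correctly identify, is what makes this iterated pairing work. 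More seriously, for the integration property you reduce everything to the deterministic bound $\|g^{\sharp}\|_{\Phi_j(2\h_j)}\le\|g\|_{\Phi_j(\h_j)}$ and then stop --- but that bound \emph{is} the proposition. Concretely, expanding $(\phi+\xi)^x(\psi+\eta)^y$ and integrating out the fluctuation fermions produces, for each splitting $y=y'y''$, a Gaussian moment $\Ebold_{C_{j+1}}\eta^{y''}$, a determinant built from $C_{j+1}$, and one must bound it by $(\mathrm{const}\cdot\h_j)^{q''}$ \emph{uniformly in the length} $q''$; this requires a Gram-type inequality for the fermionic covariance and is precisely where the hypothesis that $\ell_0$ is large enters (so that $\ell_j^2\ge \|C_{j+1}\|$, consistent with $C_{j+1}=O(L^{-2j[\phi]})$ and $\ell_j=\ell_0L^{-j[\phi]}$), while the $2^{-(p+q)}$ slack absorbs the binomial coefficients from the expansion. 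You have located the obstacle accurately, but without this estimate the argument is a roadmap rather than a proof. (A minor point in your favour: you correctly read the left-hand side of the integration property as $\Ebold_{C_{j+1}}\theta F$; as printed the $\theta$ is missing.)
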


For further details, see \cite{BS11}.  The second conclusion shows
that the norm controls the forms when a fluctuation field is
integrated out: on the right hand side the norm is a zero degree form,
and hence the expectation is a standard Gaussian expectation.

The most important case of the $T_\phi$ seminorm is the case $\phi
=0$, but knowing that $\|K (X)\|_{T_{0}}<\infty$ cannot tell us
whether $K (X)$ is integrable. For this we must limit the growth of $K
(X)$ as $\phi \rightarrow \infty$, and the resolution of this issue
will be obtained using Definition~\ref{def:regulator} below.

Our intuitive picture of $K_{j} (X)$, where $X \in \Pcal_{j}$, is that
it is dominated by a local version of the remainder
$(g,\nu,z,\lambda,q)^{3}$ in \eqref{e:I-invariance}.  To estimate such
remainders we must, in particular, estimate $I_{j,X}$ which contains
$\exp (-g_{j}\sum_{x\in X}|\varphi_x|^4)$.  By
\eqref{e:scaling-estimate} the typical field $\varphi$ at scale $j$ is
roughly constant on scale $L^{j}$, and $X$ contains $O (L^{jd})$
points. Therefore this factor looks like
$\exp(-g_{j}L^{dj}|\varphi|^4)$.  This is a function of
$\varphi/\h_{j}$ with $\h_{j} \approx g_{j}^{-1/4}L^{-jd/4}$, which in
four dimensions can be rewritten as $g_{j}^{-1/4}L^{-j[\phi]}$ because
$[\phi]=1$.  We want to prove that $g_{j}$ decays in the same way as
does $\gbar_{j}$ in \eqref{e:recursion-gbar}, and with this in mind we
replace $g_j$ by the known sequence $\gbar_j$.  This leads us to our
second choice
\begin{equation*}
    \h_j = h_j = k_0\gbar_j^{-1/4}L^{-j[\phi]}
    ,
\end{equation*}
where the constant $k_{0}$ is determined so that $\exp (-V_{j} (B))$ will,
uniformly in $j$, have a $T_{\phi} (h_{j})$ norm close to one.

In the previous discussion we made the assumption that the typical
$\varphi$ at scale $j$ is roughly constant on scales $L^{j}$.  Our
norm recognises this; it is a weighted $L_{\infty}$ norm, where the
weight permits growth as fields become atypical.  The weight is
called a large field regulator and is defined next.

Consider a test function $f$ that is an ersatz field $\varphi$, namely
a complex-valued function
$f=f_x$ for $x \in \Lambda$.  For $X \subset \Lambda$, we write $f \in
\Phipol(X)$ if $f$ restricted to $X$ is a polynomial of degree three
or less.  We define a seminorm on $\phi = (\varphi,\varphib)$ by
\begin{equation}
\label{e:9Phitilnorm}
    \|\phi \|_{\tilde\Phi_j(X)}
    =
    \inf
    \{
    \|\varphi - f\|_{\Phi_j(\ell_j)} : f \in \Phipol(X)
    \};
\end{equation}
note that we are setting $\h_j=\ell_j$ in the above equation.

\begin{definition}
\label{def:regulator}
Let $j \in \N_{0}$, $X \in \Pcal_j$, and $\phi \in \C^\Lambda$.
The \emph{large-field regulator} is given  by
\begin{align}
\label{e:9Gdef}
    \tilde G_{j} (X,\phi)
    =
    \prod_{B \in \Bcal_j(X)} \exp  \|\phi\|_{\tilde\Phi_{j}(B^*)}^2
    ,
\end{align}
where $B^{*}$ is the small set neighbourhood of $B$ defined in
\eqref{e:9ssn}.  For each $X \in \Pcal_j$, we define a seminorm on
$\Ncal(X^{*})$ as follows.  For $K(X) \in \Ncal(X^{*})$, we define
$\|K(X)\|_{\tilde{G}_{j},h_{j}}$ to be the best constant $C$ in
\begin{equation}
\label{e:9Nnormdef}
    \|K(X)\|_{T_{\phi,j} (h_{j})}
    \le
    C
    \tilde{G}_{j}(X,\phi)
    ,
\end{equation}
where we have made explicit in the notation the fact that the norm on
the left hand side is based on the choice $\h_j=h_j$.
\end{definition}

\section{The inductive step completed: existence of $K_{j+1}$}
\label{sec:rgstep}

We have already specified $V_{j+1}$ in \eqref{e:Vjplus1}.
Now we complete the inductive step by constructing $K_{j+1}$ such
that \eqref{e:Kaxioms} holds. The following
theorem is at the heart of our method \cite{BS11}.
It provides $K_{j+1}$ and says that we can continue to
prolong the sequence $(V_{j},K_{j})$ for as long as the coupling
constants $(g_{j},\nu_{j},z_{j})$ remain small.  Moreover, in this
prolongation, the $T_{0}$ norm of $K_{j+1}$ remains third order in the
coupling constants and is therefore much smaller than the perturbative
($K$-independent) part of $V_{j+1}$.

For $a\ge 0$, set $f_{j} (a ,\varnothing)=0$, and define
\begin{equation}
    \label{e:fdef}
    f_{j} (a ,X)
    =
    3 + a (|X|_j-2^d)_+,
    \quad\quad
    \text{$X \in \Pcal_{j}$ with $X \neq \varnothing$}.
\end{equation}
Note that $f_{j} (a ,X)=3$ when $X \in \Scal_j$, but that $f_{j} (a
,X)$ is larger than $3$ and increases with the size of $|X|_j$ if $X
\nin \Scal_j$.  We fix $a$ to have a sufficiently small positive
value.

The following theorem is proved for two different choices of the norm
pairs $\|\cdot\|_j$ and $\|\cdot\|_{j+1}$, in \refeq{KIH} and
\refeq{nextKbound}, and for two corresponding choices of the small
parameter $\epsilon_{\delta I}$, as follows:
\begin{itemize}
\item
$\|\cdot\|_j =\|\cdot\|_{\tilde{G}_{j},h_{j}}$ with $h_{j}=
k_{0}\bar g^{-1/4}_{j}L^{-j[\phi]}$,
and
$\|\cdot\|_{j+1} =\|\cdot\|_{\tilde{G}_{j+1},h_{j+1}}$ with $h_{j+1}=
k_{0}\bar g^{-1/4}_{j+1}L^{-(j+1)[\phi]}$.
The small parameter
$\epsilon_{\delta I}$ is proportional to $g_j^{1/4}$.
\item
$\|\cdot\|_j =\|\cdot\|_{T_{0}, \ell_j}$ with $\ell_j = \ell_0L^{-j[\phi]}$,
and $\|\cdot\|_{j+1} =\| \cdot \|_{T_0,\ell_{j+1}}$.
The small parameter
$\epsilon_{\delta I}$ is proportional to $g_j$.
\end{itemize}
Define a cone $C = \{(g_{j},\nu_{j},z_{j}) |g>0, \,|\nu | \vee |z| \le
b g, \,g_{j} \le c (b,L)\}$.  The constant $b$ is arbitrary, and $c
(b,L)$ is a function of $b,L$ constructed in the proof of the next
theorem.

\begin{theorem}
\label{thm:1} Let $(g_{j},\nu_{j},z_{j}) \in C$.  Let $a$ be
sufficiently small, and let $M$ be any (large) positive constant that
is independent of $d,L$.  There is a constant $c_{\rm pt}$ (depending
on $d,L$) such that the following holds.  Suppose that $K_{j}: \Pcal_j
\to \Ncal_j$ has properties \eqref{e:Kaxioms} and satisfies
\begin{equation}
\label{e:KIH}
    \|K_{j} (X)\|_{j}
    \le
    Mc_{\rm pt}
    \epsilon_{\delta I}^{f_{j} (a ,X)},
    \quad \quad
    X \in \Pcal_{j} \ \text{connected}
    ,
\end{equation}
Then, if $L$ is sufficiently large (depending on $M$), there exists
$K_{j+1}: \Pcal_{j+1} \to \Ncal_{j+1}$ with properties
\eqref{e:Kaxioms} at scale $j+1$
and
\begin{align}
    \label{e:nextKbound}
    \|K_{j+1} (U)\|_{j+1}
    &\le
    2c_{\rm pt}
    \epsilon_{\delta I}^{f_{j+1} (a ,U)},
    \quad\quad
    U \in \Pcal_{j+1} \ \mathrm{connected}
    .
\end{align}
\end{theorem}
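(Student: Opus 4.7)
The plan is to define $K_{j+1}$ algebraically so that axiom (i) of \eqref{e:Kaxioms} is forced to hold at scale $j+1$, and then verify axioms (ii), (iii) together with the norm estimate \eqref{e:nextKbound}.

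First I would carry out the algebraic construction. On a single $j$-block $B$, write $I_{j}(B) = I_{j+1}(B) + \delta I_{j}(B)$, with $V_{j+1}$ given by \eqref{e:Vjplus1}; the point of the definition of $V_{j+1}$ is precisely that the local monomial of degree $\le[\phi]\cdot(\text{field count})$ pieces inside the natural expansion of $\Ebold_{C_{j+1}}\theta I_{j}(B)$, together with the small-set contributions $\LT_{B}\sum_{Y \in \Scal, Y \supset B} |Y|^{-1} I_{j}(Y)^{-1} K_{j}(Y)$, are absorbed into $\Vpt$. Using the polymer expansion
\begin{equation}
    (I_{j} \circ K_{j})(\Lambda)
    =
    \sum_{X \in \Pcal_{j}} I_{j}(\Lambda \setminus X) K_{j}(X),
\end{equation}
substituting $I_{j} = I_{j+1} + \delta I_{j}$ on each $j$-block of the complement, expanding, and reorganising blocks into $(j+1)$-polymers $U$ via their closures, gives a formula
\begin{equation}
    K_{j+1}(U)
    =
    \sum \Ebold_{C_{j+1}} \theta \bigl[\,\delta I_{j}\text{-factors}\cdot K_{j}(X)\cdot I_{j}^{-1}\text{-compensators}\,\bigr],
\end{equation}
where the sum runs over compatible $(X,B$-subfamilies$)$ whose closure at scale $j+1$ is $U$. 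This is the standard Brydges--Yau / Brydges--Slade reblocking map; I would spell it out so that (i) holds tautologically. Axiom (ii) at scale $j+1$ follows from the component factorisation of $K_{j}$ and the finite-range factorisation in Proposition~\ref{prop:factorisationE}, and axiom (iii) follows by noting that each summand contributing to $K_{j+1}(U)$ involves fields only in the $j$-small-set neighbourhood of the support, which lies inside $U^{*}$ (in $\Pcal_{j+1}$) provided $L$ is chosen large enough that $j$-small-set neighbourhoods of blocks in $U$ stay inside $U^{*}$.

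Next I would prove the norm estimate \eqref{e:nextKbound}, which is the crux. The strategy is to bound each term in the reblocking formula using the product and integration properties of the $T_\phi$ seminorm together with the regulator $\tilde G_{j}$. There are three contraction mechanisms to combine:

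\textbf{(a)} \emph{Smallness of $\delta I_{j}$.} By Proposition~\ref{prop:I-action} and the definition of $V_{j+1}$ through $\Vpt(\Vhat_{j})$, the perturbative part of $\Ebold_{C_{j+1}}\theta I_{j}(B) - I_{j+1}(B)$ is of third order in $(g_{j},\nu_{j},z_{j})$ after the second-order cancellation; using $(\nu_{j},z_{j}) = O(g_{j})$ on the cone $C$, this gives at least $\epsilon_{\delta I}^{3}$ per block where $\delta I_{j}$ appears.

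\textbf{(b)} \emph{Localisation remainder.} On small sets $Y \in \Scal_{j}$, the operator $(1-\LT_{Y})$ inside the definition of $V_{j+1}$ produces a Taylor remainder in the fields of order $[\phi]\cdot(\text{degree})$ higher than the relevant threshold, which contributes a scaling factor $L^{-(d+\epsilon)}$ between scales $j$ and $j+1$ (using $d\ge 4$ and $[\phi] = (d-2)/2$) and generates the gain $M c_{\rm pt} \to 2 c_{\rm pt}$ provided $L$ is chosen large enough in terms of $M$.

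\textbf{(c)} \emph{Regulator contraction and large-polymer decay.} Passing from $\tilde G_{j}$ to $\tilde G_{j+1}$ using the integration property of the $T_{\phi}$ norm and the scaling estimate \eqref{e:scaling-estimate} for $C_{j+1}$ yields a constant per block. When $X \notin \Scal_{j}$, the bound $\epsilon_{\delta I}^{f_{j}(a,X)}$ with $f_{j}(a,X) = 3 + a(|X|_{j}-2^{d})_{+}$ defeats the entropy of summing over $j$-polymers of given reblocked support: the entropy per block is at most $L^{d}$, while each extra block of $X$ supplies a factor $\epsilon_{\delta I}^{a}$, and we pick $a$ small enough that $\log(L^{d}) \cdot a^{-1} < \log \epsilon_{\delta I}^{-1}$ for $g_{j}$ small.

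Combining (a), (b), (c) block by block yields \eqref{e:nextKbound}.

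The main obstacle, I expect, will be mechanism (b): proving that the construction of $\Vpt(\Vhat_{j})$ in \eqref{e:Vjplus1} really extracts enough of the relevant parts that what remains in $K_{j+1}$ is strictly irrelevant under rescaling. This requires tight tracking of the Taylor remainder produced by $(1-\LT_{Y})$ against the large-field regulator, using property (a) of Proposition~\ref{prop:9LTdef} to rewrite testing against any $f\in\Phipol$ as a true Taylor remainder in fields and derivatives, and then estimating that remainder with the norm \eqref{e:9Phitilnorm} so that it is controlled by $\tilde G_{j+1}(U,\phi)$ with a prefactor $L^{-(d+\epsilon)}$. The precise choice of $a$, of the threshold $p \le 9$ in \eqref{e:Phignorm}, and of $k_{0}$ in the definition of $h_{j}$ are all made to close this estimate; the second norm choice ($T_0$ with $\ell_j$) is then handled in parallel, using that $\exp(-V_{j+1}(B))$ has $T_0$ norm close to $1$ so that the gain factor is $\gbar_j$ rather than $\gbar_j^{1/4}$.
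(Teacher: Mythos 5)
The paper itself does not prove Theorem~\ref{thm:1}: it is stated as ``the heart of our method'' and its proof is deferred entirely to \cite{BS11}, so there is no internal argument to compare yours against. Your outline is the strategy that Sections 8--10 are visibly setting up and that is carried out in \cite{BS11}: define $K_{j+1}$ by writing $I_{j}=I_{j+1}+\delta I_{j}$, expanding the circle product, and reblocking onto $(j+1)$-polymers so that axiom (i) of \eqref{e:Kaxioms} holds by construction; obtain (ii) and (iii) from Proposition~\ref{prop:factorisationE}, the finite range of $C_{j+1}$, and the geometry of small-set neighbourhoods; and prove \eqref{e:nextKbound} by combining the third-order perturbative estimate behind Proposition~\ref{prop:I-action}, the contraction supplied by $1-\LT$ on small sets (which is where the largeness of $L$ relative to $M$ enters and where $Mc_{\rm pt}$ becomes $c_{\rm pt}$), and the surcharge $a(|X|_{j}-2^{d})_{+}$ in $f_{j}$ to defeat the reblocking entropy for large polymers. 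Your identification of where $a$, $L$, $k_{0}$, and the two norm choices enter is accurate. The only caveat is that the proposal remains a plan: the two points you yourself flag as the obstacles --- the explicit formula for $K_{j+1}(U)$ that yields (i)--(iii) simultaneously while correctly cancelling (rather than double-counting) the small-set contributions absorbed into $\Vhat_{j}$ in \eqref{e:Vjplus1}, and the quantitative contraction estimate for $(1-\LT_{Y})$ under the change of norm from scale $j$ to $j+1$ --- are precisely where essentially all of the work in \cite{BS11} lies, and they are asserted here rather than carried out; but nothing in the outline is a wrong turn.
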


\section{Decay of the two-point function}
\label{sec:focc}

Finally, we combine the machinery we have developed, to outline the
proof of Theorem~\ref{thm:wsaw4rep}.  As we have already noted,
Theorem~\ref{thm:wsaw4} is a consequence of
Theorem~\ref{thm:wsaw4rep}.

We must study the coupling constant flow.  The linear map
$\LT_{B}:\Ncal \rightarrow \Vcal$ is bounded in $T_{0}$ norm
\cite{BS11}, so according to the inductive assumption \refeq{KIH} on
the $T_0$ norm of $K_j$, the coupling constants in $\Vhat_j$ of
\eqref{e:Vjplus1} are small (third order) adjustments to the coupling
constants in $V_j$.  Theorem~\ref{thm:1} ensures that this smallness
is preserved as the scale advances.

We first consider the case $(\lambda_{0},q_{0})= (0,0)$.  In this
case, $(\lambda_{j} ,q_{j})= (0,0)$ for all $j$.  The definition of
$V_{j+1}$ in \eqref{e:Vjplus1} then gives rise to a non-perturbative
version of the flow equations of Proposition~\ref{prop:I-action}, in
which the effect of $K$ is now taken into account.  When $V_{j}\mapsto
V_{j+1}$ is expressed as
\begin{equation}
    (g_{j},\nu_{j},z_{j}) \mapsto (g_{j+1},\nu_{j+1},z_{j+1})
\end{equation}
we find that
\begin{align}
    &
    \label{e:recursion-g}
    g_{j+1} = g_{j} - c_{g} g_{j}^{2} +
    r_{g,j},
    \\
    &
    \label{e:recursion-mu}
    \nu_{j+1} = \nu_{j} + 2gC_{j+1} (0,0) +
    r_{\nu,j },\\
    &
    \label{e:recursion-z}
    z_{j+1} = z_{j} +
    r_{z,j},\\
    &
    \label{e:recursion-K}
    K_{j+1} = r_{K,j} (g_{j},\nu_{j},z_{j},K_{j})
    ,
\end{align}
where the $r$'s now depend also on $K_{j}$, and where we have added
the map $r_{K,j}: (g_{j},\nu_{j},z_{j}, K_{j}) \mapsto K_{j+1}$
defined by Theorem~\ref{thm:1}.  Furthermore, we prove that the $r$'s
are Lipschitz functions of $(g_{j},\nu_{j},z_{j},K_{j})$, where $K$
belongs to a Banach space normed by a combination of the norms in
Section~\ref{sec:rgstep}.  These are the properties needed to prove
that $K$ only causes a small deformation of the perturbative flow $V
\mapsto \Vpt$.

The main theorem now reduces to an exercise in dynamical systems. We
prove that there is a \emph{Lipschitz stable manifold} of initial
conditions $(z_{0},\nu_{0}) = h (m^{2},g_{0})$ for which the sequence
$(V_{j},K_{j}),\, j=0,\dots ,N$, has a limit as $N\rightarrow \infty$
and $m^{2}\downarrow 0$.  We call this the \emph{global
trajectory}. For $m^{2}=0$, the global trajectory tends to the fixed
point $(V,K)=(0,0)$.  In particular, $g_{j} \rightarrow 0$, which is
infra-red asymptotic freedom.  Referring to \eqref{e:zmdef}, we have
four unknown parameters $g_{0},\nu_{0},z_{0},m^{2}$ related by three
equations, and now there is a fourth equation $(z_{0},\nu_{0}) = h
(m^{2},g_{0})$. By the implicit function theorem we solve for the
unknowns as functions of $(g,\nu)$. As $\nu \downarrow \nu_{c} (g)$,
$m^{2} \downarrow 0$ and vice-versa.

Now we consider the flow for $(\lambda_{j},q_{j})$. According to
\eqref{e:initial-lq}, $\lambda_{0}=1$ and $q_{0}=0$.  Using
\eqref{e:LTdef}, we prove that the terms $r_{g,j},r_{\nu ,j},r_{z,j}$
do not depend on $\lambda_{j},q_{j}$ and thus the coupling constants
$g,\nu,z$ have no dependence on $\lambda,q$. From \eqref{e:Vjplus1} we
find
\begin{align}
    &
    \label{e:obs-flow-lambda}
    \lambda_{j+1}
    =
    \left(
    1 +
    g_{j} \delta_{j}
    +
    \frac{1}{2}
    \mu_{j} L^{-2j}C_{j+1}^{(1)}
    \right)
    \lambda_{j} + r_{\lambda ,j}
    ,
    \\
    &
    \label{e:obs-flow-q}
    q_{j+1}
    =
    q_{j}
    +
    \lambda^{2} \, C_{j+1} (\pp,\qq) + r_{q,j}
    ,
\end{align}
where $r_{\lambda,j},r_{q,j}$ are corrections arising from $K_{j}$.

Recall that $\Scal_{j}$ was defined in Definition~\ref{def:blocks}.
Let $s_{a,b}$ be the first scale $j$ such that there exists a polymer
in $\Scal_{j}$ that contains $\{\pp ,\qq \}$.  The correction
$r_{q,j}$ is zero for all scales $j<s_{a,b}$: according to
\eqref{e:LTdef} and the definition of $\Vhat $ in \eqref{e:Vjplus1}
there can be no $\sigma \sigmab$ contribution from $K_{j}$ until the
first scale where there is a set $X \in \Scal_{j}$ that covers $\{\pp
,\qq \}$. Also, by the finite range property, $C_{j+1} (\pp ,\qq)=0$
for $j<s_{a,b}$. Thus \eqref{e:obs-flow-q} gives
\begin{equation}
    \label{e:q-infty}
    q_{N}
    =
    \sum_{j=s_{\pp ,\qq}}^{N}
    \left(
    \lambda_{j}^{2} \,C_{j+1}(\pp,\qq)
    +
    r_{q,j}
    \right)
    .
\end{equation}
At scale $N$, $\Lambda$ is a single block in $\Bcal_{N}$, so by the
definition of the circle product, $Z_N$ is simply given by
\begin{equation}
    Z_{N}
    =
    (I_{N}\circ K_{N}) (\Lambda)
    =
    I_{N} (\Lambda) + K_{N}(\Lambda)
    .
\end{equation}
The final renormalisation group map is the action of $\Ebold_{C_{N}}$,
not $\Ebold_{C_{N}}\theta $.  This means that the fields $\phi ,\psi$
are to be set to zero in $I_{N},K_{N}$, and only dependence on
$\sigma$ remains. By \eqref{e:Idef} we compute two $\sigma$
derivatives of $I_{N}$ and find
\begin{align}
    -
    \left.
    \frac{\partial^{2}}{\partial \sigmaa\partial \sigmab}
    \right|_0
    Z_{N}
    =
    q_{N}
    -
    K_{\sigmab \sigma },
    \quad \quad \text{where} \quad
    K_{\sigmab \sigma }
    =
    \left.
    \frac{\partial^{2}K_{N}(\Lambda)}{\partial \sigmaa\partial \sigmab}
    \right|_0
    .
\end{align}
The $\sigmab \sigma $ derivative is a coefficient in the pairing
\eqref{e:pairing}, and the $T_{0}$ norm bounds this pairing, so
Theorem~\ref{thm:1} gives
\begin{equation}
    |K_{\sigmab \sigma }|
    \le
    \|K\|_{T_{0}} \s_N^{-2}
    \le
    O (g_{j}^{3})\s_N^{-2}
    .
\end{equation}
We are able to prove Theorem~\ref{thm:1} with
\begin{equation}
\label{e:sfrak}
    \s_j =
    \s_0 \ell_{j\wedge s_{\pp,\qq}}^{-1}
    \approx
    O (L^{j\wedge s_{a,b}})
    ,
\end{equation}
where $\s_0$ is a constant, so that, when $N > s_{a,b}$,
\begin{equation}
    \label{e:Ksigma-bound}
    |K_{\sigmab \sigma }|
    \le
    O (g_{N}^{3}) L^{-2N\wedge s_{a,b}} = O(g_N^3|a-b|^{-2})
    .
\end{equation}
This tends to zero as $N \rightarrow \infty$. 

By a similar estimate we can control the $r_{\lambda ,j},\,r_{q,j}$
terms in \eqref{e:obs-flow-lambda}, \eqref{e:q-infty}.  These contain
$\sigma$ derivatives of the $K_{j}$ terms in \eqref{e:Vjplus1}.  The
conclusion is that $\lambda_{\infty} = \lim_{N\rightarrow \infty
}\lambda_{N}$ and $q_{\infty} = \lim_{N\rightarrow \infty }q_{N}$
exist and are bounded away from zero.

By \eqref{e:generating-fn}, the left hand side of \refeq{eta0} is
given by
\begin{align}
\label{e:final}
    \lim_{\nu \downarrow \nu_{c}}
    (1+z_{0})
    \lim_{\Lambda \uparrow \Zd}
    \int_{\Cbold^{\Lambda}}
    e^{-S (\Lambda) - \tilde{V}_{0} (\Lambda)}
    \bar\varphi_{\pp} \varphi_{\qq}
    &=
    \big(\lim_{\nu \downarrow \nu_{c}} (1+z_{0})\big)
    \lim_{m^{2}\downarrow 0}
    q_{\infty }
    .
\end{align}
From \eqref{e:obs-flow-lambda} and \eqref{e:q-infty} we find that
\begin{align}
    \lim_{m^{2}\downarrow 0}q_{\infty }
    &\sim
    \lambda_{\infty}^{2}
    \sum_{j=s_{a,b}}^\infty C_{j+1} (\pp ,\qq)
    ,
\end{align}
where $m^{2}=0$ in $C_{j+1}$, and $\sim$ means that the ratio of the
left hand side and the right hand side tends to one as $\pp -\qq
\rightarrow \infty$. Next, we use the finite range property to restore
the scales $j< s_{a,b}$ to the sum, which then becomes the complete
finite range decomposition for the infinite volume simple random walk
two-point function $(-\Delta)^{-1}(\pp ,\qq)$,
\begin{align}
\label{e:qinftylim}
    \lim_{m^{2}\downarrow 0}q_{\infty }
    &\sim
    \lambda_{\infty}^{2} (-\Delta)^{-1} (\pp,\qq)
    .
\end{align}
The right hand side of \eqref{e:qinftylim}, and hence of
\refeq{final}, is thus asymptotic to a multiple of $|\pp -\qq|^{-2}$
as $|\pp - \qq| \rightarrow \infty$, as desired, since the inverse
Laplacian has this behaviour.

\section*{Acknowledgements}

We thank Roland Bauerschmidt for contributions to the proof of
Proposition~\ref{thm:Glims}.  The work of both authors was supported
in part by NSERC of Canada.  DB gratefully acknowledges the support
and hospitality of the Institute for Advanced Study and Eurandom,
where part of this work was done, and dedicates this work to his wife
Betty Lu.  GS gratefully acknowledges the support and hospitality of
the Institut Henri Poincar\'e, and of the Kyoto University Global COE
Program in Mathematics, during stays in Paris and Kyoto where part of
this work was done.


\end{document}